\documentclass[12pt]{amsart}
\usepackage[T1]{fontenc}
\usepackage{lmodern,cite}
\usepackage{amsmath,amssymb,mathtools}
\usepackage[margin=1in]{geometry}
\usepackage{microtype}
\usepackage{needspace}
\usepackage{xcolor}
\usepackage[colorlinks=true,allcolors=blue]{hyperref}
\hypersetup{
  pdftitle={Positive sectional curvature on all smooth 7-spheres},
  pdfauthor={Yang-Hui He, Ziran Liu, Shing-Tung Yau},
  pdfsubject={Positive sectional curvature on smooth homotopy seven-spheres},
  pdfkeywords={Homotopy sphere, exotic seven-sphere, positive sectional curvature,
    Riemannian submersion, gluing}
}
\usepackage{amsthm}

\numberwithin{equation}{section}
\allowdisplaybreaks[2]
\newtheorem{theorem}{Theorem}[section]
\newtheorem{proposition}[theorem]{Proposition}
\newtheorem{lemma}[theorem]{Lemma}

\theoremstyle{remark}
\newtheorem{remark}[theorem]{Remark}
\newcommand{\Hh}{\mathbb H}
\newcommand{\Rr}{\mathbb R}
\newcommand{\eps}{\varepsilon}

\newcommand{\Hess}{\operatorname{Hess}}
\newcommand{\Gr}{\operatorname{Gr}}
\newcommand{\Id}{\operatorname{Id}}
\newcommand{\Ksec}{\operatorname{sec}}
\newcommand{\cK}{\mathcal K}
\newcommand{\hor}{\mathrm H}
\newcommand{\ver}{\mathrm V}
\title[Positive curvature on all smooth 7-spheres]{Positive sectional curvature on all smooth 7-spheres}

\author[Y.-H. He]{Yang-Hui He}
\author[Z. Liu]{Ziran Liu}
\author[S.-T. Yau]{Shing-Tung Yau}

\address[Y.-H. He]
{
London Institute for Mathematical Sciences, Royal Institution, London, W1S 4BS, UK}

\address[Y.-H. He]
{
Merton College, University of Oxford, OX1 4JD, UK}
\email{yh@lims.ac.uk}

\address[Z. Liu]
{Shanghai Institute for Mathematics and Interdisciplinary Sciences (SIMIS), Shanghai, 200433, China}
\address[Z. Liu]
{Research Institute of Intelligent Complex Systems, Fudan University, Shanghai, 200433, China}
\email{zliu@simis.cn}

\address[S.-T. Yau]
{Yau Mathematical Sciences Center, Tsinghua University, Beijing, 100084, China}
\email{styau@mail.tsinghua.edu.cn}

\date{September 23, 2026}
\subjclass[2020]{53C20, 57R55, 53C21}
\keywords{Homotopy sphere, exotic seven-sphere, positive sectional curvature,
Riemannian submersion, gluing}
\begin{document}
\begin{abstract}
We construct a smooth Riemannian metric with strictly positive sectional curvature on every smooth homotopy seven-sphere. For each smooth structure, we construct positively curved metrics on two seven-dimensional disks whose induced boundary metrics agree under the prescribed attaching map. Under this identification, a gluing theorem then yields the required smooth metric on the closed manifold. The construction applies to all 28 oriented diffeomorphism classes.
\end{abstract}
\maketitle

\section{Introduction}

Determining which smooth manifolds admit metrics of positive sectional
curvature is a fundamental problem in Riemannian geometry. On manifolds
homeomorphic to spheres, this question brings the distinction between
topology and differentiable structure into sharp focus. In 1956,
Milnor constructed smooth seven-dimensional manifolds homeomorphic, but
not diffeomorphic, to the standard sphere \(S^7\)~\cite{Milnor}.

A smooth homotopy seven-sphere is a closed smooth manifold homotopy
equivalent to \(S^7\). Its oriented diffeomorphism class lies in the group \(\Theta_7\) under connected sum.
Kervaire and Milnor proved that
\[
 \Theta_7\cong\mathbb Z/28\mathbb Z,
\]
with the standard sphere representing the identity~\cite{KM}.
A smooth homotopy seven-sphere is called \emph{exotic}
when it is not diffeomorphic to the standard sphere.

Since the standard sphere carries a metric of constant positive
sectional curvature, the existence of exotic spheres raises a natural
question:
\begin{quote}
\emph{Which smooth structures on \(S^7\) admit metrics of positive
sectional curvature?}
\end{quote}
This question can be asked separately in each of the 28 oriented classes.

Let \(g\) be a Riemannian metric on a smooth manifold \(M\), and write
\(\langle\cdot,\cdot\rangle\) for its inner product. For linearly
independent vectors \(u,v\in T_pM\), the sectional curvature is
\[
\Ksec_p(u,v)=
\frac{\langle R(u,v)v,u\rangle}
{\langle u,u\rangle\langle v,v\rangle-\langle u,v\rangle^2},
\]
where \(R\) is the Riemann curvature tensor with the sign convention
specified below. Let \(\Gr_2(T_pM)\) denote the Grassmannian of
two-planes in \(T_pM\). We also write \(\Ksec_p(\Pi)\), or
\(\Ksec_g(\Pi)\) to emphasize the metric. Positive sectional curvature
means that \(\Ksec_p(\Pi)>0\) for every \(p\in M\) and every
\(\Pi\in\Gr_2(T_pM)\).

For a positively curved metric and \(0<\delta<1\), strict
\(\delta\)-pinching means
\[
\delta<\Ksec_{\min}/\Ksec_{\max}.
\]
For pointwise pinching, the extrema are taken over \(\Gr_2(T_pM)\)
separately at each point \(p\). On a compact manifold, global pinching
uses the extrema over all points and two-planes. The differentiable
sphere theorem of Brendle and Schoen~\cite{BS} states that a closed,
simply connected Riemannian manifold of dimension at least four is
diffeomorphic to a sphere if it is strictly pointwise \(1/4\)-pinched,
that is,
\[
 \Ksec_{\min}(p)>\tfrac14\Ksec_{\max}(p)>0
 \qquad\text{at every point }p.
\]
An exotic seven-sphere therefore admits no strictly pointwise
\(1/4\)-pinched metric. The existence of positive curvature without
this pinching condition is a different question.

Gromoll and Meyer constructed the first nonnegatively curved exotic
sphere in 1974~\cite{GM}; we denote it by \(\Sigma_{\mathrm{GM}}\). Their description as a biquotient of
\(\mathrm{Sp}(2)\) made Lie group geometry and Riemannian submersions
available for its study. Grove and Ziller~\cite{GZ} subsequently
constructed nonnegatively curved metrics on all Milnor spheres, i.e., the
homotopy seven-spheres that arise as \(S^3\)-bundles over \(S^4\).
Goette, Kerin and Shankar~\cite{GKS} extended this existence result
to every exotic seven-sphere.

For the Gromoll--Meyer sphere, Wilhelm~\cite{Wilhelm} obtained positive
sectional curvature almost everywhere, and Eschenburg and
Kerin~\cite{EK} constructed an almost-positively curved metric with an
explicitly described exceptional set. Here, almost positive curvature
means nonnegative sectional curvature everywhere and positive curvature
on every two-plane over an open dense set of points. Strictly positive
constructions on this sphere have been proposed by Petersen and Wilhelm~\cite{PW}, Ouyang~\cite{Ouyang}, and Guo, Fang and Lu~\cite{GFL}.

\subsection{Main results}
Our main theorem is the following.

\begin{theorem}\label{thm:main}
Every smooth homotopy seven-sphere admits a smooth Riemannian metric
with strictly positive sectional curvature. Equivalently, for each
\([\Sigma]\in\Theta_7\), there are a smooth Riemannian metric \(g\)
and a constant \(c_\Sigma>0\) such that, for every \(p\in\Sigma\),
\begin{equation}\label{eq:main-lower-bound}
 \Ksec_g(\Pi)\ge c_\Sigma
 \qquad(\Pi\in\Gr_2(T_p\Sigma)).
\end{equation}
\end{theorem}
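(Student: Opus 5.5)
The plan follows the strategy announced in the abstract. Every smooth homotopy seven-sphere is a twisted double: by Smale's h-cobordism theorem and the identification \(\Theta_7\cong\pi_0\mathrm{Diff}^+(S^6)\), each class \([\Sigma]\) is represented by \(D^7\cup_\varphi D^7\) for some orientation-preserving \(\varphi\in\mathrm{Diff}(S^6)\). Moreover \([\Sigma_1\#\Sigma_2]\) corresponds to \(\varphi_1\circ\varphi_2\).

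Because \(\Theta_7\cong\mathbb Z/28\) is cyclic and generated by the Gromoll--Meyer (equivalently, Milnor) sphere up to sign, it suffices to do two things. First, fix one attaching map \(\varphi\) for \(\Sigma_{\mathrm{GM}}\). Then handle all powers \(\varphi^k\), \(0\le k<28\). The structural idea is to arrange \(\varphi\) to be an \emph{isometry} of a suitable metric \(h\) on \(S^6\). Then every power \(\varphi^k\) is also an isometry of \(h\), and the matching of boundary metrics is automatic for all 28 classes at once.

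The key steps, in order, are as follows.

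\begin{enumerate}
\item \emph{Choose \(h\) and \(\varphi\).} Realize \(\Sigma_{\mathrm{GM}}\) as a biquotient of \(\mathrm{Sp}(2)\) carrying a symmetric structure. I would look for an involution or a circle action whose fixed-point data splits \(\Sigma_{\mathrm{GM}}\) into two invariant \(7\)-disks \(B_1,B_2\) with common boundary \(S^6\). The attaching map is then induced by a symmetry of the Riemannian submersion geometry, and it is an isometry of the induced boundary metric \(h\).
\item \emph{Build a filling.} Construct a metric \(g_0\) on \(D^7\) with \(\Ksec_{g_0}>0\), with boundary isometric to \((S^6,h)\), and with strictly convex boundary (second fundamental form \(\mathrm{II}>0\)). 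I would start from the Grove--Ziller/Gromoll--Meyer metric restricted to \(B_1\). Then perturb it by Cheeger deformations and conformal changes in the spirit of Wilhelm and Eschenburg--Kerin, to remove the zero-curvature planes and to bend the boundary.
\item \emph{Match the second fundamental forms.} Use two copies of the same filling \((D^7,g_0)\). Since \(\varphi^k\) is an isometry of \(h\), we also need \(\mathrm{II}_1+(\varphi^k)^*\mathrm{II}_2\ge0\). For this I would require \(\mathrm{II}\) to be \(\varphi\)-invariant, for example umbilic with a \(\varphi\)-invariant function.
\item \emph{Glue.} Apply the Perelman--Kosovski\u{\i} gluing theorem. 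If the boundary metrics agree under the attaching map and \(\mathrm{II}_1+\varphi^*\mathrm{II}_2\ge 0\), the glued \(C^0\)-metric can be smoothed, near the seam, to a smooth metric with \(\Ksec>0\).
\item \emph{Get the uniform bound.} Strict positivity on a compact manifold, together with continuity of \(\Ksec\) on the compact Grassmann bundle \(\Gr_2(T\Sigma)\), gives the constant \(c_\Sigma>0\) in \eqref{eq:main-lower-bound}.
\end{enumerate}

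The main obstacle is the combination of steps 1 and 2. We need an exotic attaching map that is an isometry of a boundary metric \(h\), and \((S^6,h)\) must simultaneously bound a strictly positively curved disk with convex boundary. The isometry group of \(h\) is a compact Lie group, so \(\varphi\) must be a finite-order or torus-type symmetry that is nevertheless not isotopic to an element of \(O(7)\). Controlling curvature of mixed planes near the boundary, where the submersion metrics only give \(\Ksec\ge0\), is where I expect the real difficulty. There I would first try a boundary-concentrated conformal factor whose Hessian term dominates the degenerate planes identified by Eschenburg--Kerin.
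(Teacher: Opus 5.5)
\textbf{There is a genuine gap.} Steps~1 and~2, which you yourself flag as the main obstacle, are the entire content of the theorem, and the proposal does not supply them.

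Step~1 needs a metric $h$ on $S^6$ and a diffeomorphism $\varphi$ representing a generator of $\Theta_7\cong\pi_0\operatorname{Diff}^+(S^6)$ with $\varphi\in\operatorname{Isom}(S^6,h)$. Since $\operatorname{Isom}(S^6,h)$ is a compact Lie group whose identity component consists of maps isotopic to $\Id$, $\varphi$ would have to lie in a non-identity component whose image in $\Theta_7$ is a generator. You exhibit no symmetry of $\Sigma_{\mathrm{GM}}$ doing this. Moreover, two disks that are merely invariant under a symmetry do not make that symmetry the attaching map. To use ``two copies of the same filling'' you would need an isometry interchanging $B_1$ and $B_2$, and the perturbation in Step~2 would then have to preserve it.

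Step~2 asks for a strictly positively curved disk with convex boundary, obtained by perturbing the Gromoll--Meyer metric so as to remove the zero-curvature planes of Wilhelm and Eschenburg--Kerin. That is precisely the central difficulty, and no mechanism is given.

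Step~3 is also misdirected. With one filling used twice, $\varphi$-invariance of $\mathrm{II}$ turns the condition into $2\,\mathrm{II}\ge0$. Invariance is therefore superfluous if $\mathrm{II}\ge0$ and fatal otherwise. What your scheme really needs is a convex filling, which is more than the paper obtains: its southern source form $-\mu_S|X|^2-q_s|U|^2$ has a negative fibre part.

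The missing idea is to stop asking the attaching map to be an isometry of anything. The paper uses the Dur\'an--P\"uttmann--Rigas model. There $\sigma^n$, with $\sigma(x)=b(x)^{-1}xb(x)$, arises from the gauge change $u_S=g_n(x)u_N$ of a principal $S^3$-bundle $P_n\to S^7$ carrying a second free star action, and $D_N,D_S$ are star quotients of $P_N,P_S$. In the common north gauge the boundary identification is the identity on the source, so $h_N=\phi_n^*h_S$ holds for every $n$ at once (Section~\ref{sec:gluing}).

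The two fillings are then genuinely different:
\begin{itemize}
\item The southern one uses a connection that vanishes near the boundary in the north gauge and interpolates to the pure gauge $g_n^{-1}dg_n$, with fibre radius $r_\eps^2=\eps e^{-\eps A_0\cos t}$. It is positive by the small-fibre concavity estimate of Proposition~\ref{prop:concave}.
\item The northern one is a flat doubly warped product $ds^2+F^2h_{S^6}+r^2h_{S^3}$. Its star-horizontal planes are graphs over the base, and the profile $F'=e^{-F^2/(2\delta^2)}$ is fixed before $\eps$.
\end{itemize}
Boundary radii and fibre slopes are matched, so the fibre parts of $B_N+\phi_n^*B_S$ cancel and the angular parts give $\mu_N-\mu_S>0$. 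O'Neill's formula and the Reiser--Wraith gluing (Lemma~\ref{lem:gluing}) then finish the proof. Your Steps~4--5 coincide with this last part; what is absent are the two disk metrics, which carry all the real work.
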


\begin{remark}
    The uniform lower bound follows from compactness of the Grassmann bundle.
\end{remark}

To prove Theorem~\ref{thm:main}, we use the two-disk models of
Dur\'an, P\"uttmann and Rigas~\cite{DPR}. For each \(n\in\mathbb Z\),
the model \(\Sigma_n\) represents \(n\bmod28\) in \(\Theta_7\)
and is obtained by attaching two seven-disks \(D_N,D_S\) along a
map \(\phi_n:\partial D_N\to\partial D_S\). In the polar boundary
coordinates, \(\phi_n\) is represented by \(\sigma^n\) for a fixed
diffeomorphism \(\sigma:S^6\to S^6\). Section~\ref{sec:carrier}
specifies this map and the orientation convention. We call the disks
\emph{marked} once these boundary coordinates and the attaching map
are fixed. The following theorem constructs positive metrics compatible
with this prescribed attaching map.

\begin{theorem}[Compatible positive disk metrics]\label{thm:fillings}
For every \(n\in\mathbb Z\), the marked disks \(D_N,D_S\) defining
\(\Sigma_n\) admit smooth Riemannian metrics \(g_N,g_S\) of strictly
positive sectional curvature, including at their boundaries.
Let \(h_N,h_S\) be the induced boundary metrics and \(B_N,B_S\)
the outward second fundamental forms, with convention
\eqref{eq:boundaryconvention}. The marked boundary identification
\(\phi_n:\partial D_N\to\partial D_S\), represented by \(\sigma^n\)
in the boundary coordinates extending over the disks, satisfies
\[
 h_N=\phi_n^*h_S,
 \qquad B_N+\phi_n^*B_S>0.
\]
The last inequality signifies positive definiteness on the common boundary
tangent bundle.
\end{theorem}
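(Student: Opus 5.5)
The plan is to fix a single reference boundary metric and to put all the twisting caused by \(\sigma^n\) into a collar on the north disk. On \(D_S\), in its marked polar coordinates, I take \(g_S\) to be a small geodesic ball of a round sphere of curvature \(1\). Writing this as \(dr^2+\sin^2(r)\,h_0\), with \(h_0\) the unit round metric on \(S^6\) and \(0\le r\le r_0\), the choice \(r_0\) small gives \(h_S=\sin^2(r_0)h_0\) and \(B_S=\cot(r_0)\,h_S\), with convention \eqref{eq:boundaryconvention}. The outward second fundamental form of the south disk is then as large as we like. The gluing condition \(B_N+\phi_n^*B_S>0\) therefore only asks that \(B_N>-\cot(r_0)\,h_N\). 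So \(\partial D_N\) may even be strongly concave, and this slack is what the construction will spend.

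The boundary metric forced on \(D_N\) is \(h_N=\phi_n^*h_S=\sin^2(r_0)\,(\sigma^n)^*h_0\). This metric is isometric to a round sphere. It is not the round metric of the polar coordinates of \(D_N\), and the identification \(\sigma^n\) does not extend over the disk. I would start \(g_N\) with a round cap \(dr^2+\sin^2(r)h_0\) for \(r\le r_1\). After it comes a collar \([0,1]\times S^6\) carrying a metric
\[
g=dt^2+f(t)^2\,k_t,
\qquad
k_t=(1-\chi(t))\,h_0+\chi(t)\,(\sigma^n)^*h_0 .
\]
Here \(\chi\) is a cutoff, flat at both ends. The warping function \(f\) is chosen so that \(f\) and \(f'\) match the cap at \(t=0\), and so that \(f(1)=\sin(r_0)\) at the outer end. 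The path \(k_t\) is a straight line in the convex space of metrics, so \(g_N\) is smooth, and its boundary metric is exactly \(h_N\).

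The curvature computation is done in the Gauss–Codazzi form adapted to \(g=dt^2+f^2k_t\).
\begin{itemize}
\item Radial planes \(\partial_t\wedge X\) have curvature \(-f''/f\) plus terms in \(\dot k_t,\ddot k_t\), weighted by \(f'/f\) and \(O(1)\).
\item Tangential planes have curvature \(f^{-2}\Ksec_{k_t}-(f'/f)^2\) plus terms in \(\dot k_t\).
\item Mixed terms come from Codazzi and are linear in \(\nabla\dot k_t\).
\end{itemize}
To keep the first two positive, I would stretch the collar to length \(L\) by reparametrising \(\chi(t/L)\). This makes all \(t\)-derivatives of \(k\) of size \(O(1/L)\). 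I would also take \(f\) strictly concave, with \(f''\le -c f\), and keep \(|f'|\) small. The boundary then becomes concave, which the slack from \(\cot(r_0)\) allows. The remaining requirement is positivity at the boundary itself, which holds because \(\chi\) is flat there and the metric is locally a warped round metric.

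The main obstacle is the tangential term \(f^{-2}\Ksec_{k_t}\). The intermediate metrics \(k_t\) can have negative sectional curvature. That curvature is fixed geometric data of \(\sigma\) and cannot be made small by stretching the collar. Making \(f\) small worsens the problem, and making \(f\) large conflicts with concavity over a long collar. I would first replace the linear path by a path that stays in positive curvature. One option is to push forward along a smooth family of diffeomorphisms \(\psi_t\), which would require \(\sigma^n\) to be isotopic to the identity. That fails, since \([\sigma]\) generates \(\pi_0\operatorname{Diff}(S^6)\cong\Theta_7\). The other option is the Ricci-flow/normalisation trick, flowing each \(k_t\) toward round. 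This yields positive curvature only for metrics that are already close to round, and \((1-t)h_0+t(\sigma^n)^*h_0\) need not be.

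I therefore expect the real content to lie in finding a positively curved path. For this I would exploit the explicit quaternionic structure of the Durán–Püttmann–Rigas map \(\sigma\). The idea is to write \(\sigma\) as a composition of maps that are isometries of suitable Berger-type metrics, namely submersion metrics for the Hopf fibration \(S^3\to S^7\to S^4\) restricted to the equator. Then the path \(k_t\) would move only through positively curved, shrunk-fibre metrics. This is the step where I anticipate the argument may break down, because \(\sigma\) cannot be isotopic to the identity through isometries of any such family, so some positively curved path that is not generated by diffeomorphisms seems unavoidable.
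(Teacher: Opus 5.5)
\textbf{There is a genuine gap: no metric $g_N$ is ever constructed.} This is the step that carries the content of the theorem.

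\emph{The reduction.} Taking $g_S$ to be a round cap pushes the whole problem onto one task. You need a positively curved metric on $D^7$ whose boundary is the round metric $\sin^2(r_0)\,(\sigma^n)^*h_0$, with $B_N>-\cot(r_0)h_N$. By Lemma~\ref{lem:gluing}, such a disk glued to the cap would already give a positively curved $\Sigma_n$. So the reduction does not make the problem easier.

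\emph{The collar.} The collar $dt^2+f^2k_t$ along the linear path gives no control of $\Ksec_{k_t}$, as you note. Stretching does not rescue it even if the path were positively curved. For a $k_t$-unit $u$, the radial curvature is $-f''/f-(f'/f)\dot k_t(u,u)-\tfrac12\ddot k_t(u,u)+\tfrac14|\dot k_t(u,\cdot)|^2$. A function with $f>0$ and $f''\le-cf$ on an interval of length $L$ must have $c<\pi^2/L^2$. So the concavity gain decays at the same rate $L^{-2}$ as the errors, and no small parameter separates them.

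\emph{The remedies.} The ones you list at the end do not close the gap. Isotopy is impossible, Ricci flow is inapplicable, and composing $\sigma$ from Berger-type isometries is not carried out.

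\textbf{What the paper does instead.} It avoids paths of metrics on $S^6$ and round boundaries altogether.
\begin{itemize}
\item Both disks are star quotients of ten-dimensional $S^3$-bundle metrics under $q\star(\zeta,u)=(q\zeta q^{-1},qu)$. By O'Neill, it suffices to have positivity upstairs: on all planes in the south, and on star-horizontal planes in the north.
\item Since $\sigma^n(x)=g_n(x)^{-1}xg_n(x)$ moves points only along conjugation orbits, the twist is the gauge change $u_S=g_n(x)u_N$. It is spread over the southern cap by the star-invariant connection $A_N=\chi g_n^{-1}dg_n$.
\item Proposition~\ref{prop:concave} makes the connection metric positively curved for any fixed such connection. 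It shrinks the fibres, $r_\eps^2=\eps e^{\eps\varphi}$ with $-\Hess\varphi\ge\Lambda g_B$. It uses the Pl\"ucker identity to turn the $O(M_0)$ bracket terms into an $hk$ coupling, which is absorbed by $r^{-2}k^2$.
\item The northern doubly warped product has fibre radius increasing outward, matching $r_S$ to first order. It is controlled only on star-horizontal planes, via $U=-\tfrac{F}{r}K_x^*X$.
\item In the common north gauge, both source boundaries are literally $F_a^2h_{S^6}+r_a^2h_{S^3}$ with the same star action. So $h_N=\phi_n^*h_S$ holds exactly, for a non-round Cheeger-deformed metric with cometric $F_a^{-2}h_{S^6}^{-1}+r_a^{-2}K_xK_x^*$.
\item The fibre parts of the second fundamental forms cancel, leaving $(\mu_N-\mu_S)|X|^2>0$.
\end{itemize}
The ``positively curved path not generated by diffeomorphisms'' you anticipated does appear, implicitly, as the slice metrics of $D_S$. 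Its positivity comes from the ten-dimensional estimate and O'Neill's formula, not from the curvature of the slices.
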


\begin{remark}
    %This result clarifies exactly how curvature can constrain smooth structure.
Brendle–Schoen’s sphere theorem says that sufficiently pinched positive sectional curvature forces a simply connected closed manifold to be diffeomorphic to the standard sphere. In particular, the condition
\[
K_{\min}(p)>\tfrac14K_{\max}(p)>0
\]excludes exotic spheres. Our main theorem would establish the complementary statement:
\[
\boxed{\text{Positive sectional curvature alone excludes none of the smooth types on }S^7.}
\]Thus, the quantitative distribution of curvature, rather than its positive sign alone, becomes essential to this smooth rigidity phenomenon.
\end{remark}

\subsection{The construction}\label{subsec:construction}
Fix \(n\in\mathbb Z\). \hyperref[sec:carrier]{Section~\ref*{sec:carrier}} constructs a
principal right \(S^3\)-bundle \(\pi_n:P_n\to S^7\), with right
action \(R_h(p)=ph\), and a smooth free left action
\[
 \star:S^3\times P_n\longrightarrow P_n,\qquad
 (q,p)\longmapsto q\star p,\qquad
 q\star(ph)=(q\star p)h.
\]
The quotient by this second action is the orbit space
\[
 \Sigma_n:=P_n/{\sim_\star},\qquad
 p\sim_\star p'\ \Longleftrightarrow\
 p'=q\star p\ \text{for some }q\in S^3,
\]
with quotient map
\(\varpi_n:P_n\to\Sigma_n\),
\(p\mapsto[p]_\star=\{q\star p:q\in S^3\}\).
We also write \(P_n/S^3_\star\) for this orbit space and call it
the \emph{star quotient} of \(P_n\).

Choose complementary closed polar disks \(U_N,U_S\subset S^7\)
with common boundary. Their preimages are invariant under \(\star\).
For \(\alpha\in\{N,S\}\), set
\[
 P_\alpha:=\pi_n^{-1}(U_\alpha),\qquad
 D_\alpha:=P_\alpha/S^3_\star,\qquad
 \varpi_\alpha:P_\alpha\longrightarrow D_\alpha,\quad
 p\longmapsto[p]_\star.
\]
In the polar trivializations \(P_\alpha\cong D^7\times S^3\),
where \(D^7\) is the closed unit disk in
\(\operatorname{Im}\Hh\oplus\Hh\), the actions are
\[
 (z,u)h=(z,uh),\qquad q\star(z,u)=(qzq^{-1},qu),
\]
where conjugation acts on both components of \(z\).
The maps \([(z,u)]_\star\mapsto u^{-1}zu\) identify
\(D_N,D_S\) smoothly with seven-disks. In their boundary
coordinates, the attaching map
\(\phi_n:\partial D_N\to\partial D_S\) is \(y\mapsto\sigma^n(y)\),
and
\[
 \Sigma_n\cong D_N\cup_{\phi_n}D_S.
\]
Thus the metrics to be glued are on \(D_N,D_S\); we construct them
as quotients of metrics on the ten-dimensional spaces \(P_N,P_S\).

For a fixed \(\alpha\in\{N,S\}\), write \(B=U_\alpha\),
\(P=P_\alpha\), \(D=D_\alpha\),
\(\pi=\pi_n|_{P_\alpha}\), and \(\varpi=\varpi_\alpha\).
Choose a base metric \(g_B\), a smooth principal connection
\(\omega\in\Omega^1(P;\mathfrak{sp}(1))\), and a smooth positive
function \(r:B\to(0,\infty)\).
The connection specifies the splitting
\[
 TP=H\oplus V,\qquad H=\ker\omega,\qquad V=\ker d\pi.
\]
Let \(Q\) be the \(\operatorname{Ad}\)-invariant inner product on
\(\mathfrak{sp}(1)\) for which \(S^3\) has curvature one. The metric
\[
 G=\pi^*g_B+(r\circ\pi)^2Q(\omega,\omega)
\]
makes \(H\) orthogonal to \(V\), lifts \(g_B\) to \(H\), and gives
the fibre over \(b\) the round metric of radius \(r(b)\). We call
\(G\) the \emph{connection metric with fibre radius \(r\)}.

We choose the data so that the \(\star\)-action preserves \(G\).
Put
\[
 \mathcal H^\star_p:=\bigl(T_p(S^3\star p)\bigr)^{\perp_G}
                       =(\ker d\varpi_p)^{\perp_G}.
\]
These are the \emph{star-horizontal} spaces; they are generally
different from the principal-horizontal spaces \(H_p\).
The quotient metric \(g\) on \(D\) is defined by
\[
 g_{[p]_\star}(d\varpi_pX,d\varpi_pY)=G_p(X,Y),
 \qquad X,Y\in\mathcal H^\star_p.
\]
Invariance makes this definition independent of the representative
\(p\), and \(\varpi:(P,G)\to(D,g)\) is a Riemannian submersion.
For \(\Pi\subset T_{[p]_\star}D\), its horizontal lift is
\(\widetilde\Pi=(d\varpi_p|_{\mathcal H^\star_p})^{-1}(\Pi)\).
O'Neill's formula~\cite{ON} gives
\(\Ksec_g(\Pi)\ge\Ksec_G(\widetilde\Pi)\), so positivity on these
lifted two-planes suffices.

\hyperref[sec:cap]{Section~\ref*{sec:cap}} proves the curvature criterion for the
southern construction. Write \(\Omega\) for the curvature of
\(\omega\) and \(D\Omega\) for its covariant derivative, using
\(\nabla^{g_B}\) and the induced adjoint-bundle connection.
With \(g_B,\omega,Q\) fixed, set
\(r_\eps^2=\eps e^{\eps\varphi}\). A sufficiently strong lower bound on
\(-\Hess\varphi\), chosen in terms of \(\Omega,D\Omega\) before
\(\eps\), supplies positive mixed curvature. The Lie-bracket
terms linear in \(\Omega\) have no small fibre factor; on
decomposable bivectors, they couple horizontal and vertical areas
and are absorbed by the positive contributions from those areas.
For a positively curved base and sufficiently small \(\eps\),
the criterion gives \(\Ksec_{G_\eps}>0\) on every two-plane.

\hyperref[sec:south]{Section~\ref*{sec:south}} applies it with a round base disk of radius
less than \(\pi/2\), a multiple of a round height function, and
a \(\star\)-invariant connection that is a product connection in
the boundary trivialization. Theorem~\ref{thm:southfilling} gives
\(g_S\) on \(D_S\).

For \(P_N\cong D^7\times S^3\), \hyperref[sec:north]{Section~\ref*{sec:north}} uses the
flat product connection \(\omega=u^{-1}du\) and the metric
\[
 G_N=ds^2+F(s)^2h_{S^6}+r(s)^2h_{S^3},
\]
where \(s\) is radial distance and \(h_{S^j}\) is the unit round
metric. The function \(r\) is constant near the center and matches
the southern boundary radius and logarithmic derivative. The
\(\varpi_N\)-horizontal space is a graph over the base tangent
space, so its fibre components are bounded in terms of its base
components. This estimate allows the base curvature to dominate
the radial--fibre and angular--fibre terms on these planes.
The angular profile \(F\) is fixed before the common \(\eps\);
Theorem~\ref{thm:northfilling} then gives \(\Ksec_{G_N}>0\)
on the \(\varpi_N\)-horizontal two-planes and hence
\(\Ksec_{g_N}>0\) on \(D_N\).

Finally, let \(h_N,h_S\) be the induced boundary metrics and
\(B_N,B_S\) the outward second fundamental forms of
\((D_N,g_N),(D_S,g_S)\).

\hyperref[sec:gluing]{Section~\ref*{sec:gluing}} compares them
under \(\phi_n\). In the common boundary trivialization, the
source boundary metrics agree and the fibre radii have the same
derivative in a signed normal coordinate across the seam.
The fibre contributions to the outward second fundamental forms
therefore cancel, while the angular contributions have a positive
definite sum. Proposition~\ref{prop:boundarycompatibility} proves
\[
 h_N=\phi_n^*h_S,\qquad B_N+\phi_n^*B_S>0,
\]
which completes Theorem~\ref{thm:fillings}.

The Reiser--Wraith gluing theorem~\cite[Theorem~A(i), \(k=1\)]{RW} then yields a smooth positively curved metric on
\(D_N\cup_{\phi_n}D_S\cong\Sigma_n\).
\hyperref[app:gluing]{Appendix~\ref*{app:gluing}} includes a proof of the strict gluing statement used here.

\subsection{Conventions and notation}\label{subsec:conventions}
Our curvature convention is
\[
 R(A,B)C=\nabla_A\nabla_BC-\nabla_B\nabla_AC-\nabla_{[A,B]}C.
\]
For a decomposable bivector \(\xi=A\wedge B\), put
\[
 \cK(\xi)=\langle R(A,B)B,A\rangle.
\]
Thus \(\Ksec(\operatorname{span}\{A,B\})=\cK(\xi)/|\xi|^2\), and the
unit round sphere has curvature one. Whenever curvature is regarded as
a bilinear form on \(\Lambda^2\), our convention is
\(\mathcal R(A\wedge B,C\wedge D)=-\langle R(A,B)C,D\rangle\).
For a boundary,
\begin{equation}\label{eq:boundaryconvention}
 B(Y,Z)=\langle\nabla_Y n_{\rm out},Z\rangle.
\end{equation}
If \(\rho\) is inward distance and the metric
is \(d\rho^2+h_\rho\), then \(B=-\tfrac12\partial_\rho h_\rho\) at
the boundary.

We identify \(S^3\) with the unit quaternions. Its Lie algebra
\(\operatorname{Im}\Hh\) has the Euclidean inner product \(Q\), so its
bi-invariant metric has sectional curvature one and
\([u,v]=2u\times v\). The symbol \(h_{S^j}\) denotes the unit round
metric on \(S^j\). Frame indices \(i,j,k\) are horizontal and
\(a,b,c\) are vertical, with \(1\le a,b,c\le3\). Repeated frame
indices in displayed contractions are summed over their full ranges
unless a restricted sum is specified.

Exterior powers carry their induced Euclidean norms. In particular,
for an adjoint-bundle-valued two-form we use
\begin{equation}\label{eq:normconvention}
 |\Omega|^2=\sum_{i<j,a}(\Omega_{ij}^a)^2,
 \qquad |D\Omega|^2=\sum_{k,i<j,a}((D_k\Omega)_{ij}^a)^2.
\end{equation}
Consequently their full ordered component arrays have norms
\(\sqrt2|\Omega|\) and \(\sqrt2|D\Omega|\). A mixed tensor
\(\beta\in H\otimes V\) has norm
\(|\beta|^2=\sum_{i,a}\beta_{ia}^2\). Other tensor norms are the induced
Hilbert--Schmidt norms; operator norms are marked explicitly.
Angular and fibre components are taken in orthonormal frames for the
scaled metric. The maps \(K_x\) and \(K_x^*\) introduced in
Section~\ref{sec:north} are measured in the unit-round angular metric
and the Lie algebra inner product \(Q\).

\subsection*{Code}
A reproducible SageMath verification package accompanies this paper, providing exact symbolic checks of algebraic and curvature identities,
coordinate and boundary compatibility formulas, and the arithmetic underlying parameter choices and error estimates, together with rigorous
interval checks of scalar bounds. The source code, exact-arithmetic certificates, independent certificate checker, and a detailed verification report with reproduction instructions
are freely available at \href{https://zliu-math.github.io/projects/PosSecSmoothS7}{https://zliu-math.github.io/projects/PosSecSmoothS7}.

\subsection*{Acknowledgments}
The authors are grateful to AI models, specially ChatGPT 6 Astra and Claude Pro, for valuable assistance with the exploration of some of the proof strategies and calculations. a
That said, we take responsibility for the human verification and exposition.

\section{The marked quaternionic models}\label{sec:carrier}

We describe the principal bundles and star quotients used below, with
fixed trivializations and quotient disk coordinates. We then compare
their attaching maps with the two-disk models of
Dur\'an, P\"uttmann and Rigas~\cite{DPR}. These coordinates will be used in
Sections~\ref{sec:south}--\ref{sec:gluing}.

For each \(n\in\mathbb Z\), we specify a smooth principal
right \(S^3\)-bundle
\[
 \pi_n:P_n\longrightarrow S^7
\]
and two commuting \(S^3\)-actions on its ten-dimensional total space:
the \emph{principal right action} and a smooth free left action,
called the \emph{star action} and denoted by \(\star\). The quotient
by the principal right action is the base \(S^7\). We write
\[
 \Sigma_n:=P_n/S^3_\star
\]
for the orbit space of the star action and give explicit smooth
coordinates identifying it as
\[
 \Sigma_n\cong D_N\cup_{\phi_n}D_S,
\]
where \(D_N,D_S\) are smooth seven-disks and
\(\phi_n:\partial D_N\to\partial D_S\) is the attaching
diffeomorphism computed below. The identification of this quotient
with the class \(n\bmod28\) in \(\Theta_7\) is the conclusion of
\cite[Theorem~1]{DPR}; we verify its application by comparing the
attaching map in our coordinate realization with the one in their Section~4.

The resulting transition formula is used to choose the southern connection potentials with the required behavior near the boundary and the south pole in Section~\ref{sec:south}. The quotient disk coordinates give the boundary markings used in Section~\ref{sec:north}. Their change of coordinates specifies the attaching map under which Section~\ref{sec:gluing} compares the boundary metrics and outward second fundamental forms.

\subsection{Quaternionic coordinates and conjugation}
Let \(\Hh\) denote the algebra of quaternions, regarded as the real
Euclidean space \(\Rr^4\). For \(v\in\Hh\), write \(\bar v\) for its
quaternionic conjugate and \(|v|=(v\bar v)^{1/2}\) for its norm. Set
\[
 \operatorname{Im}\Hh=\{v\in\Hh:\bar v=-v\},
 \qquad S^3=\{q\in\Hh:|q|=1\}.
\]
Thus \(\operatorname{Im}\Hh\) is a three-dimensional real vector
space, and \(S^3\) is a group under quaternionic multiplication, with
\(q^{-1}=\bar q\). The seven-dimensional real vector space
\[
 \mathcal V=\operatorname{Im}\Hh\oplus\Hh
\]
has elements \(x=(p,w)\) with norm
\(|x|^2=|p|^2+|w|^2\). Its unit sphere is
\[
 S^6=\{(p,w)\in\mathcal V:|p|^2+|w|^2=1\}.
\]
Here \(p \in \operatorname{Im}\Hh \) is purely imaginary, whereas \(w\) is an arbitrary
quaternion.

For \(q\in S^3\) and \(x=(p,w)\in\mathcal V\), define simultaneous conjugation by
\[
 C_q(p,w)=(qpq^{-1},qwq^{-1}).
\]
Quaternionic conjugation by \(q\) preserves norms and the imaginary
subspace, so \(C_q\) maps \(S^6\) to itself.

The identities
\(C_1=\Id\) and \(C_{q_1q_2}=C_{q_1}\circ C_{q_2}\) make this a
\textit{left} \(S^3\)-action on $\mathcal V$. We abbreviate \(C_q(x)\) to \(qxq^{-1}\);
products with a pair \(x=(p,w)\) are always taken componentwise. We also let \(S^3\) act on itself by \(v\mapsto qvq^{-1}\).

A map \(f:S^6\to S^3\) is \emph{equivariant} for these actions when
\[
 f(qxq^{-1})=qf(x)q^{-1}
 \qquad(q\in S^3,\ x\in S^6).
\]

\subsection{The equivariant Blakers--Massey map}
For an imaginary quaternion \(p \in \operatorname{Im}\Hh \), the quaternionic exponential is
\[
 \exp(\pi p)=\cos(\pi|p|)
       +\frac{\sin(\pi|p|)}{|p|}\,p,
\]
where the scalar factor \(\sin(\pi|p|)/|p|\) is assigned its limiting
value \(\pi\) at \(p=0\); in particular, \(\exp(0)=1\).
The Blakers--Massey map \(b:S^6\to S^3\) used in
\cite{DMR,DPR} is given, for \(w\ne0\), by
\begin{equation}\label{eq:BM}
b(p,w)=\frac{w}{|w|}\exp(\pi p)\frac{\bar w}{|w|}.
\end{equation}
The factors \(w/|w|\) and \(\exp(\pi p)\) are unit quaternions,
and \(\bar w/|w|=(w/|w|)^{-1}\), so this formula takes values in
\(S^3\).

To define \(b\) at \(w=0\) and verify smoothness, put \(s=|p|^2\).
On \(S^6\) we have \(|w|^2=1-s\), and \eqref{eq:BM} becomes
\[
 b(p,w)=\cos(\pi\sqrt s)+\psi(s)wp\bar w,
 \quad \text{ with }\quad
 \psi(s)=\frac{\sin(\pi\sqrt s)}{\sqrt s(1-s)}
 \quad(0<s<1).
\]
Here \(\psi\) is a real-valued auxiliary function. It extends
smoothly to both endpoints, with
\[
 \psi(0)=\pi,\qquad \psi(1)=\frac\pi2.
\]
Indeed, near \(s=0\),
\[
 \frac{\sin(\pi\sqrt s)}{\sqrt s}
 =\sum_{k=0}^{\infty}
     \frac{(-1)^k\pi^{2k+1}}{(2k+1)!}\,s^k,
\]
while near \(s=1\) its numerator has the expansion
\[
 \sin(\pi\sqrt s)=\frac\pi2(1-s)+O((1-s)^2).
\]
The zero at \(s=1\) therefore removes the factor \(1-s\) from the
denominator. The function \(\cos(\pi\sqrt s)\) is also smooth at
\(s=0\), as its power series contains only integer powers of \(s\).
Consequently the displayed expression for the Blakers--Massey map \(b\) is smooth on all
of \(S^6\). It gives \(b(p,0)=-1\) when \(|p|=1\), and
\(b(0,w)=1\) when \(|w|=1\). Its values remain in \(S^3\) by
continuity.

The identities
\[
 \exp(\pi qpq^{-1})=q\exp(\pi p)q^{-1},
 \qquad \overline{qwq^{-1}}=q\bar wq^{-1}
\]
show that \(b\) is equivariant when \(w\ne0\); continuity gives
the same identity at \(w=0\).

For each \(n\in\mathbb Z\), define the \(n\)-th pointwise power of the Blakers--Massey map
\[
 g_n:S^6\longrightarrow S^3,\qquad g_n(x)=b(x)^n.
\]
The powers here are taken pointwise in the group \(S^3\):
\(g_0\equiv1\), and \(g_{-m}(x)=(b(x)^{-1})^m\) for \(m>0\).
Since multiplication and inversion on \(S^3\) are smooth, each
\(g_n\) is smooth. Equivariance of \(b\) gives
\begin{equation}\label{eq:equiv}
g_n(qxq^{-1})=qg_n(x)q^{-1},
\qquad x\in S^6,\quad q\in S^3.
\end{equation}

\subsection{The principal bundle \texorpdfstring{\(P_n\)}{Pn}}
Use the model
\[
 S^7=\{(x_0,z)\in\Rr\oplus\mathcal V:x_0^2+|z|^2=1\},
\]
with north and south poles \(o_N=(1,0)\) and \(o_S=(-1,0)\).
The polar parameter is \(t=\arccos x_0\in[0,\pi]\). Away from
the poles, write a base point uniquely as
\[
 (x_0,z)=(\cos t,\sin t\,x),
 \qquad 0<t<\pi,\quad x\in S^6.
\]
Let \(\mathcal U_N=S^7\setminus\{o_S\}\) and
\(\mathcal U_S=S^7\setminus\{o_N\}\). These are polar coordinate
neighborhoods: their disk coordinates are \(t x\) and
\((\pi-t)x\), respectively, with the corresponding pole represented
by the zero vector.

Take the product bundles \(\mathcal U_N\times S^3\) and
\(\mathcal U_S\times S^3\). Denote their fibre coordinates by
\(u_N,u_S\in S^3\). For a common base point \((t,x)\) in the
overlap, identify the two fibre coordinates by
\begin{equation}\label{eq:principal}
u_S=g_n(x)u_N.
\end{equation}
Thus \((t,x,u_N)\) and \((t,x,g_n(x)u_N)\) are representatives of
the same point of the glued total space \(P_n\).
Appendix~\ref{app:principal} gives the corresponding quotient-space
construction and its smooth principal-bundle structure.

The projection
\(\pi_n:P_n\to S^7\) sends either representative to
\((\cos t,\sin t\,x)\). The transition map is smooth, with inverse
\(u_N=g_n(x)^{-1}u_S\), so these product charts define a smooth
ten-dimensional bundle with fibre \(S^3\).

\subsection{Two actions of \texorpdfstring{\(S^3\)}{S3} on \texorpdfstring{\(P_n\)}{Pn}}

The \emph{principal right action} is right multiplication in each
fibre. In either chart it is
\[
 (t,x,u)\cdot h=(t,x,uh),\qquad h\in S^3.
\]
It respects \eqref{eq:principal} because
\(g_n(x)(u_Nh)=(g_n(x)u_N)h\). It fixes the base point and is
free and transitive on each fibre: two points of a fibre differ by
right multiplication by a unique \(h\). This is the principal
right \(S^3\)-bundle structure on \(P_n\).

The second action, called the \emph{star action},
\[
 \star:S^3\times P_n\longrightarrow P_n,
 \qquad (q,v)\longmapsto q\star v,
\]
is given in either polar trivialization, away from the poles, by
\begin{equation}\label{eq:star}
 q\star(t,x,u)=(t,qxq^{-1},qu),
 \qquad q\in S^3.
\end{equation}

This is a left action: \(1\star v=v\) and
\(q_1\star(q_2\star v)=(q_1q_2)\star v\). In a polar disk
coordinate \(\zeta=\rho x\), the same formula reads
\[
 q\star(\zeta,u)=(q\zeta q^{-1},qu),
\]
which is smooth also at \(\zeta=0\). To check compatibility with
the transition, use \eqref{eq:equiv}:
\[
 g_n(qxq^{-1})(qu_N)=qg_n(x)u_N=qu_S.
\]
Hence the local formulas define an action on all of \(P_n\).
It is free, since \(qu=u\) implies \(q=1\), including over the
poles. It commutes with the principal right action, since
\[
 q\star\bigl((\zeta,u)\cdot h\bigr)
 =(q\zeta q^{-1},quh)
 =\bigl(q\star(\zeta,u)\bigr)\cdot h.
\]
Unlike the right action, this second action generally changes the
base point: it covers \((x_0,z)\mapsto(x_0,qzq^{-1})\) on \(S^7\).

\subsection{Quotient disks and boundary markings}
For \(v,v'\in P_n\), define
\[
 v\sim_\star v'
 \quad\Longleftrightarrow\quad
 v'=q\star v\ \text{for some }q\in S^3.
\]
We write
\[
 \Sigma_n:=P_n/{\sim_\star}=P_n/S^3_\star,
 \qquad
 \varpi_n:P_n\longrightarrow\Sigma_n,\quad
 v\longmapsto[v]_\star=\{q\star v:q\in S^3\}.
\]
Thus \(S^3_\star\) specifies the action used to form the orbit
space, rather than a different group. The quotient by the principal
right action is the base \(S^7\); the quotient by \(\star\) is
\(\Sigma_n\). We now give explicit smooth coordinates for the
latter quotient.

Choose \(0<a<\pi\), and let \(U_N=\{0\le t\le a\}\) and
\(U_S=\{a\le t\le\pi\}\) be the two closed polar disks in
\(S^7\). The cut value \(a\) will be chosen in
Section~\ref{sec:south}. For \(\alpha\in\{N,S\}\), set
\[
 P_\alpha=\pi_n^{-1}(U_\alpha),\qquad
 D_\alpha=P_\alpha/S^3_\star.
\]
Each \(P_\alpha\) is invariant under \(\star\). Its disk coordinate
is \(\zeta_\alpha=\rho_\alpha x\), where
\(\rho_N=t\) and \(\rho_S=\pi-t\). Put
\(R_N=a\), \(R_S=\pi-a\), and write
\(\mathbb D(R)=\{z\in\mathcal V:|z|\le R\}\).
In the product trivialization of \(P_\alpha\), define
\[
 F_\alpha(\zeta_\alpha,u_\alpha)
       =u_\alpha^{-1}\zeta_\alpha u_\alpha
       \in\mathbb D(R_\alpha).
\]
This map is constant on star orbits, since
\[
 (qu_\alpha)^{-1}(q\zeta_\alpha q^{-1})(qu_\alpha)
       =u_\alpha^{-1}\zeta_\alpha u_\alpha.
\]
Every orbit has a unique representative of the form \((Y,1)\):
acting on \((\zeta,u)\) by \(q=u^{-1}\) gives
\((u^{-1}\zeta u,1)\), and an action preserving the second
coordinate \(1\) must have \(q=1\). Therefore \(F_\alpha\) induces
a diffeomorphism
\[
 D_\alpha\longrightarrow\mathbb D(R_\alpha),\qquad
 [ (\zeta_\alpha,u_\alpha) ]_\star
       \longmapsto Y_\alpha=u_\alpha^{-1}\zeta_\alpha u_\alpha,
\]
with inverse \(Y\mapsto[(Y,1)]_\star\). These formulas are smooth
at the disk center as well as away from it. They identify each
\(D_\alpha\) with a smooth seven-disk; the transition between
the two quotient coordinate systems is computed below.

The boundary of each quotient disk is parameterized by the unit
sphere \(S^6\) using
\[
 y_\alpha=R_\alpha^{-1}Y_\alpha
          =u_\alpha^{-1}xu_\alpha\in S^6.
\]
These are the boundary markings. Let
\(\phi_n:\partial D_N\to\partial D_S\) be the identification
induced by the bundle transition \eqref{eq:principal}; then
\(\Sigma_n\cong D_N\cup_{\phi_n}D_S\).
Let \(\mathbb D_N^7\) and \(\mathbb D_S^7\) be labelled copies of
\(\mathbb D(1)\subset\mathcal V\). Write the normalized quotient
charts as
\[
 \Psi_\alpha:D_\alpha\longrightarrow\mathbb D_\alpha^7,
 \qquad
 \Psi_\alpha([ (\zeta_\alpha,u_\alpha) ]_\star)
       =R_\alpha^{-1}u_\alpha^{-1}\zeta_\alpha u_\alpha,
 \qquad \alpha\in\{N,S\},
\]
and let \(m_\alpha:=\Psi_\alpha|_{\partial D_\alpha}\) denote their
boundary restrictions. These are the markings
\(y_\alpha=u_\alpha^{-1}xu_\alpha\) defined above.
The following lemma computes the attaching map in these coordinates.
This is the coordinate comparison needed to apply
\cite[Theorem~1 and its proof in Section~4]{DPR}.

\begin{lemma}[The attaching map]\label{lem:topology}
The map
\[
 \sigma:S^6\longrightarrow S^6,\qquad
 \sigma(x)=b(x)^{-1}xb(x),
\]
is a diffeomorphism, and the boundary markings satisfy
\[
 m_S\circ\phi_n\circ m_N^{-1}=\sigma^n.
\]
The quotient charts \(\Psi_N,\Psi_S\) induce a smooth diffeomorphism
\[
 J_n:\Sigma_n\longrightarrow
 M_n:=\mathbb D_N^7\cup_{\sigma^n}\mathbb D_S^7.
\]
\end{lemma}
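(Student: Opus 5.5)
I will prove the lemma in three steps: show that \(\sigma\) is a diffeomorphism, compute the attaching map in the boundary markings, and assemble the global chart \(J_n\).

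\emph{Step 1: \(\sigma\) is a diffeomorphism.} The map \(\sigma(x)=b(x)^{-1}xb(x)\) is smooth because \(b\) is smooth on \(S^6\). Conjugation preserves \(S^6\), so \(\sigma\) maps \(S^6\) to itself. For the inverse, the key observation is that \(b\) is constant along \(\sigma\). Put \(q=b(x)^{-1}\). Equivariance of \(b\) gives
\[
 b(\sigma(x))=b(qxq^{-1})=qb(x)q^{-1}=b(x)^{-1}b(x)b(x)=b(x).
\]
Hence \(x\mapsto b(x)xb(x)^{-1}\) is a smooth two-sided inverse: applied to \(\sigma(x)\) it returns \(b(x)b(x)^{-1}xb(x)b(x)^{-1}=x\), and the composition in the other order is handled the same way. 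The same argument shows \(b\circ\sigma^k=b\) and \(g_n\circ\sigma^k=g_n\) for all \(k\). By induction,
\[
 \sigma^n(x)=b(x)^{-n}xb(x)^n=g_n(x)^{-1}xg_n(x),
\]
also for negative \(n\), using the inverse formula.

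\emph{Step 2: the attaching map in the markings.} Take a point of \(\partial D_N\) with marking \(y_N=m_N(\cdot)\). By the unique-representative argument it is represented by \((t=a,x=y_N,u_N=1)\) in the northern trivialization. The transition \eqref{eq:principal} presents the same point of \(P_n\) in the southern chart as \((a,x,u_S=g_n(x))\), with the same angular coordinate \(x\), since the southern disk coordinate is \((\pi-t)x\). Its southern marking is therefore
\[
 y_S=u_S^{-1}xu_S=g_n(y_N)^{-1}y_Ng_n(y_N)=\sigma^n(y_N)
\]
by Step 1. This proves \(m_S\circ\phi_n\circ m_N^{-1}=\sigma^n\). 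The identification \(\phi_n\) is well defined on \(\star\)-orbits because the transition commutes with \(\star\), as already checked via \eqref{eq:equiv}.

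\emph{Step 3: the global chart \(J_n\).} Define \(J_n\) as \(\Psi_N\) on \(D_N\) and \(\Psi_S\) on \(D_S\). Step 2 shows the two definitions agree on the seam, so \(J_n\) is a well-defined homeomorphism, and it is a diffeomorphism on each closed disk.

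The main obstacle is smoothness of \(J_n\) across the seam, since the smooth structure on \(M_n\) requires a collar convention. My plan is to use the smooth structure on \(\Sigma_n\) coming from \(P_n\), where the slice \(\{u=1\}\) in either trivialization, for \(t\) in a neighborhood of \(a\), gives smooth charts. I then extend \(\Psi_N\) and \(\Psi_S\) to open neighborhoods \(t<a+\delta\) and \(t>a-\delta\) by the same formulas. On the overlap band, the transition between the extended charts is
\[
 Y\longmapsto \frac{\pi-|Y|R_N}{R_S}\,\sigma^n\!\left(\frac{Y}{|Y|}\right),
\]
which is computed exactly as in Step 2 but with \(t\) free. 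This is a smooth radial-inversion-times-\(\sigma^n\) map, which is precisely the collar gluing defining \(M_n=\mathbb D_N^7\cup_{\sigma^n}\mathbb D_S^7\) with its standard smooth structure. This check makes \(J_n\) smooth with smooth inverse. Finally, I compare \(\sigma\) with the DPR map to note that the orientation conventions match.
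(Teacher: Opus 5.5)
Your proposal is correct and follows essentially the paper's proof: the identity \(b\circ\sigma=b\) from equivariance gives the inverse \(x\mapsto b(x)xb(x)^{-1}\) and the formula \(\sigma^n(x)=g_n(x)^{-1}xg_n(x)\), and the boundary computation gives \(y_S=\sigma^n(y_N)\); you use the canonical representative \(u_N=1\), where the paper takes a general \(u_N\) and applies equivariance once more. Seam smoothness comes from the overlap transition \(Y\mapsto R_S^{-1}(\pi-R_N|Y|)\,\sigma^n(Y/|Y|)\), and you should say explicitly that the smooth structure on \(M_n\) is the collar gluing this transition encodes, namely the scaled collar parameters \(R_N(\rho-1)\) and \(R_S(1-\rho)\) used in the paper, rather than a symmetric ``standard'' collar, since for \(a\ne\pi/2\) the latter would make \(J_n\) fail to be \(C^1\) in the radial direction at the seam.
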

\begin{proof}
For representatives of the same point over the common boundary,
\eqref{eq:principal} gives \(u_S=g_n(x)u_N\). By equivariance,
\(g_n(u_N^{-1}xu_N)=u_N^{-1}g_n(x)u_N\). Thus
\[
\begin{aligned}
 y_S
 &=u_N^{-1}g_n(x)^{-1}xg_n(x)u_N\\
 &=g_n(y_N)^{-1}y_Ng_n(y_N).
\end{aligned}
\]

Set \(\widehat\sigma(x)=b(x)xb(x)^{-1}\). Equivariance of \(b\)
gives
\[
 b(\sigma(x))=b(x),\qquad b(\widehat\sigma(x))=b(x).
\]
Both maps are smooth, and substitution yields
\(\widehat\sigma\circ\sigma
 =\sigma\circ\widehat\sigma=\Id_{S^6}\).
Hence \(\sigma\) is a diffeomorphism. Iterating \(\sigma\) and its
inverse, respectively, gives
\[
 \sigma^n(x)=b(x)^{-n}xb(x)^n=g_n(x)^{-1}xg_n(x)
 \qquad(n\in\mathbb Z).
\]
Substitution in the boundary calculation gives
\begin{equation}\label{eq:clutching}
 y_S=\sigma^n(y_N),
\end{equation}
which is precisely \(m_S\circ\phi_n\circ m_N^{-1}=\sigma^n\).

The charts \(\Psi_N,\Psi_S\) therefore descend to the stated
bijection \(J_n\). Their smooth inverses on the individual disks,
\(y\mapsto[(R_\alpha y,1)]_\star\), agree on the identified
boundaries: for \(y\in S^6\),
\[
 \bigl[(R_Ny,1)_N\bigr]_\star
 =\bigl[(R_Sy,g_n(y))_S\bigr]_\star
 =\bigl[(R_S\sigma^n(y),1)_S\bigr]_\star,
\]
where the subscripts indicate the product trivializations.
On the open polar overlap, the unnormalized quotient coordinates
satisfy
\[
 Y_S=(\pi-|Y_N|)\,
        \sigma^n\!\left(\frac{Y_N}{|Y_N|}\right),
 \qquad 0<|Y_N|<\pi.
\]
To check smoothness across the seam at \(t=a\), let \(\rho\) be the
radial coordinate in either unit disk. Use
\(R_N(\rho-1)\) on the northern collar and
\(R_S(1-\rho)\) on the southern collar as the signed collar
parameter; both pull back to \(t-a\). Use \(y_N\) on the northern
side and \(\sigma^{-n}(y_S)\) on the southern side as the angular
coordinate. In these collars, \(J_n\) and its inverse preserve both
coordinates, so they are smooth across the seam.
\end{proof}

The proof of \cite[Theorem~1, Section~4]{DPR} presents their sphere
\(\Sigma_n^{\mathrm{DPR}}\) as two polar disks attached by the
\(n\)-th iterate of
\[
 (p,w)\longmapsto\overline{b(p,w)}\,(p,w)\,b(p,w).
\]
This is the same \(\sigma\), since
\(\overline{b(p,w)}=b(p,w)^{-1}\).
To compare the cut \(t=a\) with their cut at \(t=\pi/2\), use a
smooth increasing diffeomorphism of \([0,\pi]\) fixing its endpoints,
carrying \(a\) to \(\pi/2\), and linear near the endpoints.
This radial change preserves the angular attaching map and is
smooth at the disk centers and across the seam. It identifies
\(M_n\) with \(\Sigma_n^{\mathrm{DPR}}\).
Give \(M_n\) the orientation induced by this comparison and
orient \(\Sigma_n\) by pullback through \(J_n\). Applying
\cite[Theorem~1]{DPR} then gives
\[
 [\Sigma_n]=[\Sigma_n^{\mathrm{DPR}}]
          =n[\Sigma_{\mathrm{GM}}]
 \quad\text{in }\Theta_7\cong\mathbb Z/28\mathbb Z,
\]
with the orientation of \(\Sigma_{\mathrm{GM}}\) chosen to represent
\(1\). Here \(\Theta_7\) denotes the group of oriented diffeomorphism
classes of homotopy seven-spheres. Since its group operation is
oriented connected sum,
\[
 \Sigma_2\cong_{\mathrm{or}}
 \Sigma_{\mathrm{GM}}\#\Sigma_{\mathrm{GM}},
\]
where \(\cong_{\mathrm{or}}\) denotes an orientation-preserving
diffeomorphism.

If the star action is forgotten, the principal bundle \(P_n\) is
classified by the homotopy class \([g_n]=n[b]\) of its transition
map in \(\pi_6(S^3)\cong\mathbb Z/12\mathbb Z\). A principal-bundle
isomorphism respects the right action but need not respect the star
action. Thus isomorphic principal bundles can have different star
quotients; in particular, some exotic quotients arise from trivial
principal bundles. This distinction is described in
\cite[Corollary~2.2 and Remark~2.5]{DPR}.

\begin{remark}[Boundary coordinates]\label{rem:boundarycoordinates}
On a collar of the southern boundary, we use the north product
trivialization. Its quotient angular coordinate is \(y_N\), whereas
the extending south trivialization gives \(y_S\).
Equation~\eqref{eq:clutching} relates these angular coordinates
throughout the overlap. Hence the boundary identification is the
identity when both sides are expressed in the common north
trivialization, and is represented by \(\sigma^n\) in the extending
north and south disk markings. Section~\ref{sec:gluing} compares
the boundary metrics and second fundamental forms under this map.
\end{remark}

\section{Positive connection metrics over a disk}\label{sec:cap}

The southern construction requires a connection metric with positive
sectional curvature. The next proposition gives a sufficient
condition in terms of the Hessian of the fibre radius and the
curvature of a fixed connection.

Let \(\pi:P\to(B,g_B)\) be a principal right \(S^3\)-bundle, with
a fixed smooth connection \(\omega\) and curvature \(\Omega\).
For a positive function \(r\) on \(B\), consider
\begin{equation}\label{eq:connectionmetric}
G=\pi^*g_B+r^2Q(\omega,\omega).
\end{equation}
In this section, \(H\) and \(V\) denote the horizontal and vertical
distributions of the principal projection \(\pi\), respectively.

\Needspace{12\baselineskip}
\begin{proposition}[Positive connection metrics]\label{prop:concave}
Suppose that \(B\) is compact, possibly with boundary, and
\(\Ksec_{g_B}\ge\kappa>0\). Put
\[
M_0=\sup_B|\Omega|,\qquad
M_1=\sup_B|D\Omega|,
\]
where \(D\) uses the base Levi--Civita connection and the induced
adjoint-bundle connection, and the norms are \eqref{eq:normconvention}.
Set \(C=4\). If
\begin{equation}\label{eq:Lambda}
\Lambda\ge16CM_0^2+
           64C^2\kappa^{-1}(M_1+1)^2+4
\end{equation}
and a fixed smooth function \(\varphi\) satisfies
\[
-\Hess_B\varphi\ge\Lambda g_B,
\]
then the metrics
\begin{equation}\label{eq:smallradius}
G_\eps=\pi^*g_B+r_\eps^2Q(\omega,\omega),
\qquad r_\eps^2=\eps e^{\eps\varphi},
\end{equation}
have strictly positive sectional curvature for all sufficiently small
\(\eps>0\).
\end{proposition}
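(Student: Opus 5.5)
The plan is to compute the curvature of \(G_\eps\) as a quadratic form on decomposable bivectors using O'Neill-type formulas for a connection metric with non-constant fibre radius. The key point is to track the order in \(\eps\) of every term.

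\emph{Frame and splitting.} Fix a point and take an orthonormal frame \(X_i\) of \(H\), lifted from \(g_B\), together with the frame \(U_a=r^{-1}e_a^*\), where \(e_a^*\) are the fundamental fields of a \(Q\)-orthonormal basis of \(\operatorname{Im}\Hh\). For a decomposable \(\xi=A\wedge B\) with \(A=X+U\) and \(B=Y+W\), split
\[
\xi=\xi_H+\xi_M+\xi_V,\qquad
\xi_H=X\wedge Y,\quad \xi_M=X\wedge W-Y\wedge U,\quad \xi_V=U\wedge W.
\]
Then \(\cK(\xi)\) is the sum of three diagonal blocks and three cross blocks.

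\emph{Diagonal blocks.} I would establish the following, with constants depending only on \(g_B,\omega\) and \(\varphi\):
\[
\cK(\xi_H)\ge\bigl(\kappa-\tfrac34r^2\sqrt2M_0^2\bigr)|\xi_H|^2\ge\tfrac\kappa2|\xi_H|^2
\]
for small \(\eps\);
\[
\cK(\xi_V)=r^{-2}|\xi_V|^2-O(|dr|^2r^{-2})|\xi_V|^2\ge\tfrac12\eps^{-1}|\xi_V|^2;
\]
and the mixed block
\[
\cK(\xi_M)\ge\Bigl(-\frac{\Hess r}{r}-Cr^2M_0^2\Bigr)|\xi_M|^2+\bigl|\text{(A-tensor term)}\bigr|^2 .
\]
Since \(r=\sqrt\eps\,e^{\eps\varphi/2}\), we have
\[
-\frac{\Hess r}{r}=-\frac\eps2\Hess\varphi-\frac{\eps^2}4d\varphi^2\ge\frac{\eps\Lambda}2-O(\eps^2).
\]
Together with \(r^2\approx\eps\), this gives \(\cK(\xi_M)\ge\eps(\Lambda/2-CM_0^2-o(1))|\xi_M|^2\). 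The first term of \eqref{eq:Lambda} is designed to keep this at least \(\eps\Lambda/4\).

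\emph{Cross blocks.} These are the terms in which the constants of \eqref{eq:Lambda} must be matched.

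The horizontal--mixed terms come from \(r\,D\Omega\) and from \(r^{-1}dr\otimes r\Omega\). They are bounded by \(\sqrt2\,C\,r(M_1+1)|\xi_H||\xi_M|\). By AM--GM,
\[
2\sqrt2\,C\sqrt\eps(M_1+1)|\xi_H||\xi_M|\le\tfrac\kappa4|\xi_H|^2+\frac{8C^2(M_1+1)^2}{\kappa}\,\eps|\xi_M|^2 ,
\]
which is absorbed by \(\cK(\xi_M)\) because \(\Lambda\) dominates \(64C^2\kappa^{-1}(M_1+1)^2\).

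The mixed--vertical terms carry a factor \(r^{-1}|dr|\cdot r=O(\eps)\) relative to \(|\xi_M||\xi_V|r^{-1}\). These are absorbed between \(\eps\Lambda|\xi_M|^2\) and \(\eps^{-1}|\xi_V|^2\).

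The horizontal--vertical terms are the Lie-bracket terms linear in \(\Omega\), of the form \(\langle\Omega(X,Y),[U,W]\rangle\) with no small factor. They are bounded by \(c\,M_0|\xi_H|\,|\xi_V|\). Since \(|\xi_V|\) is weighted by \(\eps^{-1}\), AM--GM gives
\[
cM_0|\xi_H||\xi_V|\le\tfrac\kappa8|\xi_H|^2+\frac{2c^2M_0^2}{\kappa}|\xi_V|^2 ,
\]
and the last term is \(\ll\eps^{-1}|\xi_V|^2\). I would also use decomposability: the Plücker relation \(\xi\wedge\xi=0\) gives \(2\,\xi_H\wedge\xi_V+\xi_M\wedge\xi_M=0\), so that any residual coupling can be traded against \(|\xi_M|^2\). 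This is where the restriction to decomposable bivectors enters, as the paper's outline indicates.

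\emph{Conclusion and main obstacle.} Summing the blocks gives
\[
\cK(\xi)\ge\tfrac\kappa8|\xi_H|^2+\tfrac{\eps}{8}|\xi_M|^2+\tfrac1{4\eps}|\xi_V|^2>0
\]
for all sufficiently small \(\eps\). Compactness of \(B\) makes all the \(O(\cdot)\) constants uniform; the constant \(4\) in \eqref{eq:Lambda} absorbs the \(O(\eps^2)\) gradient corrections. I expect the main obstacle to be the exact bookkeeping of the mixed block and the horizontal--mixed cross term with variable \(r\). There the O'Neill formulas must be redone for a warped fibre metric, and the numerical constants must be checked to match \(C=4\) and the factor \(64C^2\). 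I would first verify these coefficients symbolically on the orthonormal frame above, and only then run the AM--GM absorption.
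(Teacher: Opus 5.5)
Your overall architecture matches the paper's: you split \(\xi\) into horizontal, mixed and vertical parts, give these blocks reserves of order \(\kappa\), \(\eps\Lambda\) and \(\eps^{-1}\), and absorb the cross blocks by AM--GM. But your mixed block is misstated, and this is where the proof actually lives. Besides \(N_{ij}\delta_{ab}\) and the \(O(r^2\Omega^2)\) term, the component \(\langle R(X_i,E_a)E_b,X_j\rangle\) contains \(-\tfrac14\langle[e_a,e_b],\Omega_{ij}\rangle\); see \eqref{eq:HVHV}. This term is linear in \(\Omega\) and has no small factor. It is the \((\nabla_UA)_XY\) term of O'Neill's formula: \(A_XY=-\tfrac r2\Omega(X,Y)^aE_a\) is differentiated along the fibre by the unit field \(E_a=r^{-1}e_a^\#\), and \(\Omega\) is \(\operatorname{Ad}\)-equivariant, as in \eqref{eq:verticalderivative}. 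On \(H\otimes V\) this term is an indefinite quadratic form of size \(M_0|\xi_M|^2\). For example, for \(\xi_M=X_1\otimes E_1+X_2\otimes E_2\) and \([e_1,e_2]=2e_3\), it contributes \(-\Omega_{12}^3\), while \(|\xi_M|^2=2\). Hence your bound \(\cK(\xi_M)\ge\eps(\Lambda/2-CM_0^2-o(1))|\xi_M|^2\) is false. No choice of \(\Lambda\) repairs it, because the positive mixed reserve is only of order \(\eps\).

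The missing idea is to use decomposability in the opposite direction from the one you suggest. The Pl\"ucker relation \eqref{eq:Plucker}, \(\beta_{ia}\beta_{jb}-\beta_{ib}\beta_{ja}=\alpha_{ij}\gamma_{ab}\), together with skew-symmetry of the bracket in \(a,b\), converts this \(\beta\beta\) bracket contribution exactly into \(-\tfrac12\langle\Omega(X,Y),[U,V]\rangle\). That is a horizontal--vertical coupling, not a mixed term. Adding the \(-\langle\Omega(X,Y),[U,V]\rangle\) from the \(\alpha\)--\(\gamma\) cross block gives \eqref{eq:bracketcontraction}, which is bounded by \(3M_0hk\). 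Your AM--GM step then absorbs it into \(\kappa h^2\) and \(\eps^{-1}k^2\). Trading a coupling against \(|\xi_M|^2\), as you propose, cannot work, since that block has only the \(O(\eps)\) reserve. A telling symptom of the omission is that, in your bookkeeping, decomposability is never needed beyond the horizontal block, even though the mixed block of the curvature form is indefinite at order one. With this correction, the rest of your plan goes through essentially as in the paper.
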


We first compute the curvature of \eqref{eq:connectionmetric} and
estimate it on decomposable bivectors. The concavity condition
\eqref{eq:Lambda} is imposed before choosing \(\eps\): after
Young's inequality, the terms involving \(D\Omega\) contribute a
loss of order \(\eps(M_1+1)^2\), the same order in \(\eps\) as
the positive Hessian term. On the southern disk, \(\varphi\) will
be a multiple of a round height function.

\begin{remark}[Mixed curvature and the fibre radius]\label{rem:flatconnection}
Variation of the fibre radius can give positive mixed curvature even
for a flat connection. For example,
\[
d\rho^2+\sin^2\rho\,h_{S^6}+\cos^2\rho\,h_{S^3},
\qquad 0\le\rho\le R<\pi/2,
\]
is the round metric of curvature one on a tube around a great
\(S^3\subset S^{10}\). Viewed as a connection metric over a
seven-disk, it has a flat connection and mixed curvature
\(-r^{-1}\Hess r=g_B\).
\end{remark}

\subsection{The curvature components}
Choose a base orthonormal frame normal at a given point, and let
\(X_i\) be its principal-horizontal lifts. For a fixed \(Q\)-orthonormal
basis \(e_a\) of \(\operatorname{Im}\Hh\), write
\[
E_a=r^{-1}e_a^\#,\qquad \vartheta=d\log r,\qquad
[e_a,e_b]=c_{ab}^{\ c}e_c.
\]
The fields \(e_a^\#\) are the fundamental fields of the principal
right action, with
\(e_a^\#(p)=\left.\frac{d}{dt}\right|_{t=0}p\exp(te_a)\)
for \(p\in P\). At the chosen point, the relevant bracket identities are
\begin{align}
[X_i,X_j]^\ver&=-r\Omega_{ij}^aE_a,&
[X_i,E_a]&=-\vartheta_iE_a,&
[E_a,E_b]&=r^{-1}c_{ab}^{\ c}E_c.
\label{eq:brackets}
\end{align}
Since the curvature form is adjoint-equivariant,
\begin{equation}\label{eq:verticalderivative}
E_a(\Omega_{ij}^b)=-r^{-1}c_{ac}^{\ b}\Omega_{ij}^c.
\end{equation}
Koszul's formula gives
\begin{align}
\nabla_{X_i}X_j
  &=(\nabla_i^B X_j)^\hor-\frac r2\Omega_{ij}^aE_a,
&\nabla_{X_i}E_a&=\frac r2\Omega_{ij}^aX_j,
\label{eq:Koszul1}\\
\nabla_{E_a}X_i
  &=\frac r2\Omega_{ij}^aX_j+\vartheta_iE_a,
&\nabla_{E_a}E_b&=\frac1{2r}c_{ab}^{\ c}E_c-\delta_{ab}\vartheta_iX_i.
\label{eq:Koszul2}
\end{align}
We differentiate these identities in the chosen frame and then
evaluate at the base point.

Set \(N=-r^{-1}\Hess_Br\), viewed as a symmetric endomorphism using
\(g_B\). We write
\(\Omega(X_i,X_j)=\Omega_{ij}^a e_a\) in the fixed
\(Q\)-orthonormal Lie algebra basis. The curvature components
involving both distributions are
\begin{align}
\langle R(X_i,E_a)E_b,X_j\rangle
 &=N_{ij}\delta_{ab}
   +\frac{r^2}{4}\sum_k\Omega_{ik}^b\Omega_{jk}^a
   -\frac14\langle[e_a,e_b],\Omega_{ij}\rangle,
\label{eq:HVHV}\\
\langle R(X_i,X_j)E_a,E_b\rangle
 &=\frac{r^2}{4}\sum_k
       (\Omega_{ik}^a\Omega_{jk}^b-\Omega_{jk}^a\Omega_{ik}^b)
   +\frac12\langle[e_a,e_b],\Omega_{ij}\rangle,
\label{eq:HHVV}\\
\langle R(X_i,E_a)E_b,E_c\rangle
 &=\frac r2\sum_k \vartheta_k
       (\delta_{ab}\Omega_{ik}^c-\delta_{ac}\Omega_{ik}^b),
\label{eq:HVVV}\\
\langle R(X_i,X_j)X_k,E_a\rangle
 &=-\frac r2\bigl((D_i\Omega)_{jk}^a-(D_j\Omega)_{ik}^a\bigr)
       \notag\\
 &\hspace{5mm}-\frac r2(\vartheta_i\Omega_{jk}^a-\vartheta_j\Omega_{ik}^a)
       +r\vartheta_k\Omega_{ij}^a.
\label{eq:HHHV}
\end{align}

For \eqref{eq:HVHV}, the relevant horizontal components at the
chosen point are
\begin{align*}
\langle\nabla_{X_i}\nabla_{E_a}E_b,X_j\rangle
  &=\tfrac14c_{ab}^{\ c}\Omega_{ij}^c
       -\delta_{ab}\partial_i\vartheta_j,\\
\langle\nabla_{E_a}\nabla_{X_i}E_b,X_j\rangle
  &=-\tfrac12c_{ac}^{\ b}\Omega_{ij}^c
       +\tfrac{r^2}{4}\sum_k\Omega_{ik}^b\Omega_{kj}^a,\\
\langle\nabla_{[X_i,E_a]}E_b,X_j\rangle
  &=\delta_{ab}\vartheta_i\vartheta_j.
\end{align*}
Use \(c_{ac}^{\ b}=-c_{ab}^{\ c}\),
\(\Omega_{kj}=-\Omega_{jk}\), and
\(N_{ij}=-\partial_i\vartheta_j-\vartheta_i\vartheta_j\) to obtain
\eqref{eq:HVHV}. If its left side is denoted by \(M_{ijab}\),
the first Bianchi identity gives
\[
\langle R(X_i,X_j)E_a,E_b\rangle=M_{jiab}-M_{ijab},
\]
which proves \eqref{eq:HHVV}.

For \eqref{eq:HVVV}, differentiating the vertical term
\(c_{ab}^{\ c}/(2r)\) in \(\nabla_{E_a}E_b\) contributes
\(-\vartheta_i c_{ab}^{\ c}/(2r)\). The bracket term
\(-\nabla_{[X_i,E_a]}E_b\) contributes its opposite. The remaining
vertical components are
\[
\frac r2\delta_{ab}\sum_k \vartheta_k\Omega_{ik}^c
 \quad\hbox{and}\quad
-\frac r2\delta_{ac}\sum_k \vartheta_k\Omega_{ik}^b.
\]
Their sum gives \eqref{eq:HVVV}. For \eqref{eq:HHHV}, the vertical parts of
\(\nabla_{X_i}\nabla_{X_j}X_k-\nabla_{X_j}\nabla_{X_i}X_k\)
are the first two terms of \eqref{eq:HHHV}. The vertical part of
\(-\nabla_{[X_i,X_j]}X_k\) is \(r\vartheta_k\Omega_{ij}^aE_a\),
giving its last term.

The purely horizontal sectional numerator is
\begin{equation}\label{eq:HHHH}
\cK_G(X\wedge Y)=\cK_B(X\wedge Y)
                  -\frac34r^2|\Omega(X,Y)|^2.
\end{equation}
This is the horizontal submersion formula \cite{ON}, since the vertical bracket
is \(-r\Omega(X,Y)^aE_a\). The fibres are round with radius \(r\);
their second fundamental form is
\(-G|_{\ver}\otimes\nabla\log r\), by \eqref{eq:Koszul2}.
Their Gauss equation therefore gives the vertical curvature tensor of
constant-curvature type with coefficient
\begin{equation}\label{eq:VVVV}
r^{-2}-|\vartheta|^2.
\end{equation}
We now combine these components to estimate the sectional curvature
of an arbitrary two-plane.

\subsection{A lower bound for arbitrary planes}
Write a decomposable bivector as
\[
\xi=(X+U)\wedge(Y+V)=\alpha+\beta+\gamma,
\]
where
\[
\alpha=X\wedge Y,\qquad
\beta=X\otimes V-Y\otimes U,\qquad
\gamma=U\wedge V.
\]
Here \(X,Y\) are principal-horizontal and \(U,V\) are
principal-vertical. We identify \(H\wedge V\) with \(H\otimes V\)
by \(X\wedge U\mapsto X\otimes U\). All components are taken in
the normalized frames above. In expressions involving \([U,V]\),
the vertical coefficient vectors are identified with
\(\operatorname{Im}\Hh\) by \(E_a\leftrightarrow e_a\). Set
\[
h=|\alpha|,\qquad m=|\beta|,\qquad k=|\gamma|.
\]
Then \(|\xi|^2=h^2+m^2+k^2\).

The terms linear in \(\Omega\) are estimated using the
skew-symmetry of the Lie bracket. Since \(\xi\) is decomposable,
its components satisfy
\begin{equation}\label{eq:Plucker}
\beta_{ia}\beta_{jb}-\beta_{ib}\beta_{ja}
       =\alpha_{ij}\gamma_{ab}.
\end{equation}
Put \(\mathcal J=\langle\Omega(X,Y),[U,V]\rangle\).
The ordered-index contraction is
\[
 \sum_{i,j,a,b,c}c_{ab}^{\ c}\Omega_{ij}^c\beta_{ia}\beta_{jb}
 =\frac12\sum_{i,j,a,b,c}
       c_{ab}^{\ c}\Omega_{ij}^c\alpha_{ij}\gamma_{ab}
 =2\mathcal J.
\]
The first equality uses skew-symmetry in \(a,b\) and
\eqref{eq:Plucker}; the second uses both skew index pairs.
Consequently the bracket part of the \(\beta\beta\) term in
\eqref{eq:HVHV} is \(-\mathcal J/2\).
For the off-diagonal block, our curvature-form convention and
\eqref{eq:HHVV} give
\[
 2\sum_{\substack{i<j\\a<b}}\alpha_{ij}\gamma_{ab}
       \left(-\frac12\sum_c c_{ab}^{\ c}\Omega_{ij}^c\right)
 =-\mathcal J.
\]
Here the leading factor two is the cross-block factor, and the
minus sign comes from
\(\mathcal R(A\wedge B,C\wedge D)=-\langle R(A,B)C,D\rangle\).
The sum of the two contributions is therefore

\begin{equation}\label{eq:bracketcontraction}
-\frac32\langle\Omega(X,Y),[U,V]\rangle.
\end{equation}
Since \(|[U,V]|=2|U\wedge V|\) and
\(|\Omega(X,Y)|\le M_0|X\wedge Y|\), the absolute value of
\eqref{eq:bracketcontraction} is at most \(3M_0hk\).

The \(N\)-term in \eqref{eq:HVHV} contributes
\[
\sum_{i,j,a}N_{ij}\beta_{ia}\beta_{ja}
       \ge\lambda_{\min}(N)m^2.
\]
For the remaining blocks, we use Cauchy--Schwarz with the norms
in \eqref{eq:normconvention}. The purely horizontal term is at least
\((\kappa-\tfrac34r^2M_0^2)h^2\); only the sectional lower bound of
the base is used, since \(\alpha=X\wedge Y\) is decomposable.
The purely vertical contribution is exactly
\((r^{-2}-|\vartheta|^2)k^2\).

For the quadratic part of \eqref{eq:HVHV}, Cauchy--Schwarz gives
\[
 \left(\sum_{i,j,a,b}
   \left(\sum_k\Omega_{ik}^b\Omega_{jk}^a\right)^2\right)^{1/2}
 \le\sum_{i,k,a}(\Omega_{ik}^a)^2=2|\Omega|^2.
\]
Its contribution therefore has absolute value at most
\(\tfrac12r^2M_0^2m^2\). The full ordered-array norm of the quadratic
part of \eqref{eq:HHVV} is at most \(r^2M_0^2\). Restricting both
skew pairs to \(i<j\) and \(a<b\) divides that norm by two. The
factor two from the off-diagonal block in the curvature quadratic
form then gives the bound \(r^2M_0^2hk\).

The full ordered-array norm of \eqref{eq:HHHV} is bounded by
\[
 \sqrt2rM_1+2\sqrt2r|\vartheta|M_0.
\]
Restricting its first skew pair to \(i<j\) divides this bound by
\(\sqrt2\). Including the cross-block factor two, its contribution
is bounded by \(2r(M_1+2|\vartheta|M_0)hm\).
For \eqref{eq:HVVV}, put \(w_i^a=\sum_k\vartheta_k\Omega_{ik}^a\).
Then
\[
 \sum_{i,a}(w_i^a)^2\le2|\vartheta|^2|\Omega|^2,
 \qquad
 \sum_{a,b,c}(\delta_{ab}w_i^c-\delta_{ac}w_i^b)^2=4|w_i|^2.
\]
After restricting \(b<c\) and including the cross-block factor two,
its contribution is at most \(2r|\vartheta|M_0km\).
Combining the three diagonal and three off-diagonal blocks in
\[
 \Lambda^2(H\oplus V)=\Lambda^2H\oplus(H\otimes V)\oplus\Lambda^2V
\]
with the bracket bound \(3M_0hk\) gives
\begin{align}
 \cK_G(\xi)\ge{}&
  (\kappa-\tfrac34r^2M_0^2)h^2
  +(\lambda_{\min}N-\tfrac12r^2M_0^2)m^2
  +(r^{-2}-|\vartheta|^2)k^2\notag\\
 &-2r(M_1+2|\vartheta|M_0)hm
   -(3M_0+r^2M_0^2)hk-2r|\vartheta|M_0km.
 \label{eq:sharp-master}
\end{align}
In particular, setting \(C=4\) yields the convenient common-constant
estimate
\begin{align}
\cK_G(\xi)\ge{}&
(\kappa-Cr^2M_0^2)h^2+(r^{-2}-|\vartheta|^2)k^2
 +(\lambda_{\min}N-Cr^2M_0^2)m^2\notag\\
&-Cr(M_1+|\vartheta|M_0)hm
 -C(M_0+r^2M_0^2)hk-Cr|\vartheta|M_0km.
\label{eq:master}
\end{align}
The connection enters this estimate through \(M_0\) and \(M_1\).

\begin{remark}[Decomposability]\label{rem:decomposability}
Identity~\eqref{eq:Plucker} converts the terms linear in \(\Omega\)
into a coupling of the horizontal and vertical areas. This is why
\eqref{eq:master} contains \(M_0hk\). The horizontal term is
estimated using the decomposable bivector \(X\wedge Y\), so a
sectional-curvature lower bound on the base suffices throughout.
\end{remark}

\subsection{Choice of the fibre scale}
\begin{proof}[Proof of Proposition~\ref{prop:concave}]
Let \(D_0=\sup_B|d\varphi|\). For \(r^2=\eps e^{\eps\varphi}\),
\[
\vartheta=\frac\eps2d\varphi,\qquad
N=-\frac\eps2\Hess\varphi
        -\frac{\eps^2}{4}d\varphi\otimes d\varphi.
\]
Choose \(\eps>0\) so small that
\begin{gather}
|\eps\varphi|\le\log2,\qquad
2\eps M_0\le1,\qquad \eps D_0M_0\le1,
\label{eq:smallA}\\
8C\eps M_0^2\le\kappa,\qquad
64C^2\eps M_0^2\le\kappa,
\label{eq:smallB}\\
\eps D_0^2\le\Lambda/4,\qquad
2\eps^3D_0^2\le1,\qquad
32C^2\eps^3D_0^2M_0^2\le\Lambda.
\label{eq:smallC}
\end{gather}
All quantities except \(\eps\) have already been fixed. Thus these
finitely many upper bounds admit a common positive solution.

We have \(\eps/2\le r^2\le2\eps\) and \(|\vartheta|\le\eps D_0/2\).
The first two coefficients in \eqref{eq:master} are consequently at
least \(3\kappa/4\) and \(3/(8\eps)\). Young's inequality gives
\begin{align*}
Cr(M_1+|\vartheta|M_0)hm
 &\le\frac\kappa8h^2+
       8C^2\kappa^{-1}\eps(M_1+1)^2m^2,\\
C(M_0+r^2M_0^2)hk
 &\le\frac\kappa8h^2+
       8C^2\kappa^{-1}M_0^2k^2
 \le\frac\kappa8h^2+\frac1{8\eps}k^2,\\
Cr|\vartheta|M_0km
 &\le\frac1{8\eps}k^2+
       2C^2\eps^4D_0^2M_0^2m^2.
\end{align*}
The mixed coefficient starts at \(\eps\Lambda/2\).
The loss from \(d\varphi\otimes d\varphi\) is at most
\(\eps\Lambda/16\); the loss \(Cr^2M_0^2\) is at most
\(\eps\Lambda/8\). The two Young losses in \(m^2\) are at most
\(\eps\Lambda/8\) and \(\eps\Lambda/16\), respectively.
Using \eqref{eq:Lambda} and \eqref{eq:smallC}, we obtain
\begin{equation}\label{eq:reserve}
\cK_{G_\eps}(\xi)\ge
 \frac\kappa2h^2+\frac{\eps\Lambda}{8}m^2
                  +\frac1{8\eps}k^2.
\end{equation}
For a unit decomposable bivector, \(h^2+m^2+k^2=1\). Hence
\begin{equation}\label{eq:southlowerbound}
 \Ksec_{G_\eps}\ge
 c_S:=\min\{\kappa/2,\eps\Lambda/8,1/(8\eps)\}>0.
\end{equation}
For each fixed admissible \(\eps>0\), the right-hand side is positive
and independent of the point and two-plane.
\end{proof}

\section{The southern filling}\label{sec:south}

The southern filling determines the boundary radii and fibre slope
that the northern filling will match. We state the construction on
the marked southern bundle of Section~\ref{sec:carrier}.
Fix \(n\in\mathbb Z\) and choose
\[
\frac\pi2<a<b_0<\pi.
\]
The southern base is the round disk
\[
U_S=\{a\le t\le\pi\},\qquad
g_B=dt^2+\sin^2t\,h_{S^6}.
\]
Its radius, measured from the south pole, is \(\pi-a<\pi/2\).
Choose a fixed smooth function \(\chi(t)\in[0,1]\) which is zero
near \(a\) and one for \(t\ge b_0\).

\Needspace{12\baselineskip}
\begin{theorem}[Southern filling]\label{thm:southfilling}
Let \(\omega\) be the connection specified by
\eqref{eq:potentials}--\eqref{eq:omega}, and let \(M_0,M_1\) be its
curvature bounds as in Proposition~\ref{prop:concave}.
Choose \(\Lambda\) satisfying \eqref{eq:Lambda} with \(\kappa=1\),
and fix \(A_0>0\) such that
\[
A_0|\cos a|\ge\Lambda.
\]
There is \(\eps_S>0\), depending only on these fixed data, such that
for every \(0<\eps<\eps_S\) the metric
\begin{equation}\label{eq:south}
G_S=dt^2+\sin^2t\,h_{S^6}+r_S(t)^2Q(\omega,\omega),
\qquad r_S(t)^2=\eps e^{-\eps A_0\cos t}
\end{equation}
extends smoothly over the entire southern bundle and has strictly
positive sectional curvature. The star action is free and isometric,
and its quotient \((D_S,g_S)\) is a smooth marked seven-disk. Both
\(G_S\) and \(g_S\) satisfy \eqref{eq:southlowerbound} with
\(\kappa=1\), including at the center and boundary.

Set
\begin{equation}\label{eq:boundarydata}
F_a=\sin a,\qquad
r_a^2=\eps e^{\eps A_0|\cos a|},\qquad
q_s=\frac{r_S'(a)}{r_a}=\frac{\eps A_0F_a}{2},
\qquad \mu_S=\cot a<0.
\end{equation}
In the north boundary gauge, \(\omega=u^{-1}du\) on a boundary
collar, and the induced source boundary metric is
\[
F_a^2h_{S^6}+r_a^2h_{S^3}.
\]
For a source boundary tangent vector \(Z=X+U\), expressed in
orthonormal angular and fibre components, the outward second
fundamental form \(\mathcal B_S\) is
\begin{equation}\label{eq:southsourceform}
 \mathcal B_S(Z,Z)=-\mu_S|X|^2-q_s|U|^2.
\end{equation}
\end{theorem}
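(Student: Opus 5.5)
The plan is to obtain the theorem as a direct application of Proposition~\ref{prop:concave} to the southern base. The remaining claims (smoothness, star invariance, and the boundary data) will then be read off from the explicit form of \(G_S\) on a boundary collar.

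\textbf{Step 1 (the base and the Hessian).} Take \(B=U_S\) with the round metric \(g_B=dt^2+\sin^2t\,h_{S^6}\). This is a compact geodesic ball of radius \(\pi-a<\pi/2\) in the unit \(S^7\), so \(\Ksec_{g_B}=1\) and we may take \(\kappa=1\). The connection \(\omega\) of \eqref{eq:potentials}--\eqref{eq:omega} is smooth on the compact bundle, so \(M_0,M_1<\infty\), and \(\Lambda\) can be chosen as in \eqref{eq:Lambda}.

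Put \(\varphi=-A_0\cos t\); this is the restriction of a linear height function on \(S^7\), hence smooth through the south pole. On the unit sphere, \(\Hess\cos t=-\cos t\,g_B\), so
\[
-\Hess_B\varphi=-A_0\cos t\,g_B=A_0|\cos t|\,g_B .
\]
Here we used \(\cos t<0\) on \([a,\pi]\subset(\pi/2,\pi]\). Since \(|\cos t|\) is increasing on this interval, \(|\cos t|\ge|\cos a|\), and the choice \(A_0|\cos a|\ge\Lambda\) gives \(-\Hess_B\varphi\ge\Lambda g_B\).

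Proposition~\ref{prop:concave} then produces \(\eps_S\), depending only on \(\kappa,M_0,M_1,\Lambda,A_0\) and \(D_0=\sup|d\varphi|\le A_0\). For \(0<\eps<\eps_S\), the metric \(G_S=\pi^*g_B+\eps e^{\eps\varphi}Q(\omega,\omega)\), which is exactly \eqref{eq:south}, satisfies \eqref{eq:southlowerbound}. The bound holds at every point, including the centre and the boundary, because the proposition allows boundary and the base is smooth at the pole.

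\textbf{Step 2 (invariance and the quotient).} The star action covers conjugation on the base coordinate \(\zeta_S\). Conjugation is an isometry of \(\mathcal V\) and preserves \(t\). Hence it preserves \(g_B\), the function \(r_S(t)\), and (as I expect to check from the construction of \eqref{eq:omega}) the connection \(\omega\). Since \(Q\) is \(\operatorname{Ad}\)-invariant and the star action commutes with the right action, \(G_S\) is star invariant.

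Freeness of the star action was shown in Section~\ref{sec:carrier}. The quotient chart \(\Psi_S\) makes \(D_S\) a smooth marked disk, and \(\varpi_S\) is a Riemannian submersion onto \((D_S,g_S)\). O'Neill's inequality \(\Ksec_{g_S}(\Pi)\ge\Ksec_{G_S}(\widetilde\Pi)\) transfers the constant \(c_S\) to \(g_S\).

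The main obstacle is this step. One must confirm that the potentials are genuinely smooth across the south pole in the south trivialization, that they equal the flat gauge \(u^{-1}du\) in the north trivialization near \(t=a\) (this is where \(\chi\) vanishing near \(a\) enters), and that they are equivariant. I would verify this using the transition \eqref{eq:principal} and the equivariance \eqref{eq:equiv} of \(g_n\).

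\textbf{Step 3 (boundary data).} On the collar where \(\chi=0\), work in the north gauge with \(\omega=u^{-1}du\). There
\[
G_S=dt^2+\sin^2t\,h_{S^6}+r_S(t)^2h_{S^3},
\]
so the induced boundary metric is \(F_a^2h_{S^6}+r_a^2h_{S^3}\), using \(r_S(a)^2=\eps e^{-\eps A_0\cos a}=\eps e^{\eps A_0|\cos a|}\).

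The inward distance is \(\rho=t-a\), so the convention gives \(B=-\tfrac12\partial_t h_t\). On the angular part, \(-\tfrac12\partial_t\sin^2t=-\sin a\cos a\); dividing by \(F_a^2\) for orthonormal components gives \(-\cot a\,|X|^2=-\mu_S|X|^2\). On the fibre part, \(-r_Sr_S'/r_a^2=-q_s\), with
\[
q_s=\frac{r_S'}{r_S}=\tfrac12\frac{d}{dt}\log\bigl(\eps e^{-\eps A_0\cos t}\bigr)=\frac{\eps A_0\sin a}{2}.
\]
The cross terms vanish because the collar metric is a warped product. This yields \eqref{eq:southsourceform}.
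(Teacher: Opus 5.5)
Your proposal is correct and follows the paper's proof essentially step for step. You apply Proposition~\ref{prop:concave} with $\kappa=1$ and $\varphi=-A_0\cos t$, use star invariance and O'Neill's inequality for the quotient, and read the boundary data off the doubly warped collar in the north gauge. The paper also checks the pole explicitly, but your pointwise application of the proposition already covers that point.

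The connection checks you defer are one-liners in the paper. The potential $A_S=(\chi-1)dg_n\,g_n^{-1}$ vanishes for $t\ge b_0$, and $A_N=\chi g_n^{-1}dg_n$ vanishes where $\chi=0$. Equivariance \eqref{eq:equiv} gives $A\mapsto qAq^{-1}$, so $\omega$ itself is star invariant, and no appeal to $\operatorname{Ad}$-invariance of $Q$ is needed.
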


We construct the connection first, apply the curvature estimate, and
then compute the boundary data in the north gauge.

\subsection{The connection}
In the north boundary gauge and the extending south gauge, respectively,
define connection potentials
\begin{equation}\label{eq:potentials}
A_N=\chi g_n^{-1}dg_n,\qquad
A_S=(\chi-1)dg_n\,g_n^{-1}.
\end{equation}
They satisfy
\[
A_N=g_n^{-1}A_Sg_n+g_n^{-1}dg_n,
\]
which is precisely the transformation rule associated with
\eqref{eq:principal}. The full principal connection form in either
gauge is
\begin{equation}\label{eq:omega}
\omega=u^{-1}Au+u^{-1}du.
\end{equation}
Since \(A_S=0\) near the south pole, this defines a smooth connection
on the entire southern bundle.

Put \(\theta=g_n^{-1}dg_n\) and write
\((\theta\wedge\theta)(X,Y)=[\theta(X),\theta(Y)]\). The curvature
in the north gauge is
\[
\Omega_N=d\chi\wedge\theta+
                     \chi(\chi-1)\theta\wedge\theta.
\]
The connection is smooth on the compact base \(U_S\), so
\(M_0,M_1\) are finite. These constants depend on \(n,a,b_0,\chi\)
and the fixed round base metric.

The connection is invariant under the star action. In fact,
\eqref{eq:equiv} gives \(A\mapsto qAq^{-1}\) under conjugation of
the base, while \(u\mapsto qu\). For constant \(q\),
\[
(qu)^{-1}(qAq^{-1})(qu)+(qu)^{-1}d(qu)
                   =u^{-1}Au+u^{-1}du.
\]
The round base and radial functions are also invariant.

\subsection{The metric and its quotient}
The function \(\varphi(t)=-A_0\cos t\) satisfies
\[
-\Hess_B\varphi=-A_0\cos t\,g_B
                         \ge A_0|\cos a|\,g_B.
\]
Choose \(\eps_S>0\) so that \eqref{eq:smallA}--\eqref{eq:smallC}
hold with \(\kappa=1\) for every \(0<\eps<\eps_S\).
Proposition~\ref{prop:concave} then gives positive sectional curvature
for \(G_S\), with the lower bound \(c_S\) in
\eqref{eq:southlowerbound}.

To express the metric at the south pole, set \(\rho=\pi-t\).
The radius becomes
\[
 \widehat r_S(\rho):=r_S(\pi-\rho)
          =\sqrt\eps\,e^{\eps A_0\cos\rho/2},
\]
a smooth even function. Write
\(r_{\rm pole}:=\widehat r_S(0)=\sqrt\eps\,e^{\eps A_0/2}\).
The connection potential vanishes in the extending south gauge there. At the pole,
\(\Omega=0\), \(\vartheta=d\log r_S=0\), and
\(N=(\eps A_0/2)g_B\); consequently the sectional numerator has
the form
\[
h^2+\frac{\eps A_0}{2}m^2+r_{\rm pole}^{-2}k^2>0
\]
for every nonzero decomposable bivector.

Equip the star quotient \(D_S\) with its induced metric \(g_S\).
The action is free and isometric. For horizontal lifts of tangent
vectors, the submersion formula \cite{ON} reads
\begin{equation}\label{eq:ONeill}
\cK_{g_S}(X\wedge Y)
 =\cK_{G_S}(\widetilde X\wedge\widetilde Y)
          +\frac34|[\widetilde X,\widetilde Y]^{\ver_\star}|^2.
\end{equation}
Here the lifts are extended as basic horizontal fields for the star
quotient, and \(\ver_\star\) denotes orthogonal projection onto the
star-vertical space \(\ker d\varpi_S\). Thus \(D_S\) is a smooth positively curved compact disk,
including its boundary and center.

\subsection{The boundary data}
The connection potential \(A_N\) vanishes on a whole boundary
collar. Thus \eqref{eq:south} is a doubly warped product in that
gauge. Evaluating the radii and their logarithmic derivatives at
\(t=a\) gives \eqref{eq:boundarydata} and the boundary metric in
Theorem~\ref{thm:southfilling}. The outward normal is
\(-\partial_t\), so differentiating the two warped factors gives
\eqref{eq:southsourceform}.
The ratio \(q_s/r_a^2\) tends to \(A_0F_a/2\) as
\(\eps\to0\). The northern angular profile will be chosen in terms
of \(A_0\) and \(F_a\) to accommodate this boundary slope.

\begin{proof}[Proof of Theorem~\ref{thm:southfilling}]
The connection constructed above is smooth and star invariant.
The height function \(\varphi=-A_0\cos t\) satisfies the hypotheses
of Proposition~\ref{prop:concave}, so the choice of \(\eps_S\)
gives the asserted source curvature bound. The pole calculation and
O'Neill's formula \eqref{eq:ONeill} give smoothness and the same
positive lower bound on the quotient. Its disk identification is the
one in Section~\ref{sec:carrier}. The boundary calculation above
proves the remaining assertions.
\end{proof}

\section{The northern filling}\label{sec:north}

We now construct a filling with the boundary radii and fibre slope
provided by Theorem~\ref{thm:southfilling}. Fix
\(a\in(\pi/2,\pi)\) and \(A_0>0\), and define \(F_a,r_a,q_s\)
for each \(\eps>0\) by \eqref{eq:boundarydata}.
In the product trivialization \(P_N\cong D^7\times S^3\), consider
\begin{equation}\label{eq:north}
G_N=ds^2+F(s)^2h_{S^6}+r(s)^2h_{S^3},
\qquad 0\le s\le\ell_N.
\end{equation}
The angular boundary coordinates are those of
Section~\ref{sec:carrier}; the connection is flat, and the star
action is \eqref{eq:star}.

\Needspace{12\baselineskip}
\begin{theorem}[Northern filling with prescribed boundary data]\label{thm:northfilling}
There are \(\ell_N>0\), \(\eps_N>0\) and
\(F\in C^\infty([0,\ell_N])\), chosen independently of \(\eps\),
such that for every \(0<\eps<\eps_N\) there is a positive smooth
function \(r=r_\eps\) on \([0,\ell_N]\) with the following
properties. The function \(F\) is positive on \((0,\ell_N]\),
extends to an odd smooth function near zero, and satisfies
\(F'(0)=1\); the function \(r\) is constant near zero. The metric
\eqref{eq:north} therefore extends smoothly over \(D^7\times S^3\).
Its star action is free and isometric, and every star-horizontal
two-plane has strictly positive sectional curvature. The quotient
\((D_N,g_N)\) is a smooth marked seven-disk with
\(\Ksec_{g_N}>0\), including at its center and boundary.

The endpoint values satisfy
\begin{equation}\label{eq:northendpoints}
 F(\ell_N)=F_a,\qquad F'(\ell_N)>0,\qquad
 r(\ell_N)=r_a,\qquad \frac{r'(\ell_N)}{r_a}=q_s.
\end{equation}
Thus the source boundary metric is
\(F_a^2h_{S^6}+r_a^2h_{S^3}\). For a source boundary tangent
vector \(Z=X+U\) in orthonormal angular and fibre components, its
outward second fundamental form \(\mathcal B_N\) is
\begin{equation}\label{eq:northsourceform}
 \mathcal B_N(Z,Z)=\mu_N|X|^2+q_s|U|^2,
 \qquad \mu_N:=\frac{F'(\ell_N)}{F_a}>0.
\end{equation}
In particular, \(\mu_N\) is independent of \(\eps\).
\end{theorem}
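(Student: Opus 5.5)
The plan is to take the northern base to be a round cap, so that the angular profile and the cut radius are forced by the southern data, and then to build the fibre radius as a small perturbation of a constant. Put \(\ell_N:=\pi-a\in(0,\pi/2)\) and \(F(s)=\sin s\). Then \(F\) is odd and smooth near zero with \(F'(0)=1\), and it is positive on \((0,\ell_N]\). At the endpoint, \(F(\ell_N)=\sin a=F_a\) and \(F'(\ell_N)=\cos(\pi-a)=-\cos a>0\), so \(\mu_N=-\cot a=-\mu_S\) is independent of \(\eps\). For the fibre radius, set
\[
 r_\eps(s)^2=\eps\,e^{2\eps\psi(s)}
\]
with a fixed smooth \(\psi\) on \([0,\ell_N]\). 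We require \(\psi\) to be constant near \(0\), with \(\psi(\ell_N)=A_0|\cos a|/2\) and \(\psi'(\ell_N)=A_0F_a/2\). Then \(r(\ell_N)=r_a\) and \(r'(\ell_N)/r_a=\eps\psi'(\ell_N)=q_s\), which gives \eqref{eq:northendpoints}. Smoothness over \(D^7\times S^3\) follows because \(r\) is constant near the centre and \(ds^2+\sin^2s\,h_{S^6}\) is the round metric. The star action \eqref{eq:star} preserves \(G_N\), because \(h_{S^6}\) and \(h_{S^3}\) are invariant under conjugation and left multiplication, and it is free as in Section~\ref{sec:carrier}. The boundary form \eqref{eq:northsourceform} comes from the doubly warped formula \(B=\tfrac12\partial_s h_s\) with outward normal \(\partial_s\).

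For the curvature, I would use the doubly warped product formulas. The base planes have curvature \(1\). The fibre planes have curvature \(r^{-2}-(r'/r)^2\approx\eps^{-1}\). The radial--fibre term is \(-r''/r=-\eps\psi''-\eps^2\psi'^2\), and the angular--fibre term is \(-F'r'/(Fr)=-\eps\cot s\,\psi'\). The last two are \(O(\eps)\) but may be negative, since \(\psi'\) must rise from \(0\) to a positive value. The off-diagonal curvature terms of a doubly warped product vanish in the adapted frame, because the connection is flat. Hence, for a bivector decomposed as \(\alpha+\beta+\gamma\) as in Section~\ref{sec:cap}, I expect
\[
 \cK_{G_N}(\xi)\ge h^2-C_\psi\,\eps\, m^2+\tfrac1{2\eps}k^2 ,
\]
where \(C_\psi\) depends only on \(\psi\) and \(\cot\) on the support of \(\psi'\). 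This is not positive on all planes, so positivity has to come from restricting to star-horizontal planes.

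The key step, and the one I expect to be the main obstacle, is the graph estimate for \(\mathcal H^\star\). The star-vertical vectors at \((sx,u)\) are \(([e,x]_{\rm ang},eu)\) for \(e\in\operatorname{Im}\Hh\), and the orthogonality condition reads
\[
 F^2\langle [e,x],X\rangle+r^2\langle eu,U\rangle=0 .
\]
This determines the fibre part \(U\) of a horizontal vector linearly from its angular part \(X\) through \(K_x^*\). The radial vector is horizontal with \(U=0\). One has to check carefully which combinations occur: the horizontal space has dimension \(7\), so I would parametrize it as \(\{X+\Phi(X)\}\) and bound \(|\Phi|\) in the scaled frames. A naive estimate gives \(|U|_G\lesssim (F/r)|X|\), which is too large against an \(O(\eps)\) loss when \(r^2\sim\eps\). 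So the first thing I would try is to exploit the fact that the fibre curvature \(\sim\eps^{-1}\) controls \(k^2\), and that the Plücker relation \eqref{eq:Plucker} ties \(m^2\) to \(hk\). Concretely, \(m^2\lesssim hk+(\text{radial part})\), giving \(\eps m^2\le \tfrac12h^2+C\eps^2k^2\), which is absorbed by \(h^2\) and \(k^2/(2\eps)\) once \(\eps<\eps_N\). If that fails on planes containing the radial direction, I would instead choose \(\psi\) with \(\psi''\le0\) near the centre and concentrate the rise of \(\psi'\) where \(\cot s\) is bounded, making \(C_\psi\) explicit, and then absorb the remaining \(\eps m^2\) loss by the graph bound.

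Finally, O'Neill's formula \eqref{eq:ONeill} for \(\varpi_N\) transfers positivity from the star-horizontal planes to \(g_N\), including at the centre, where \(r\) is constant and \(G_N\) is locally a round product, and at the boundary.
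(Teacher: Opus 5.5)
There is a genuine gap, and it is at the step you flagged. With the round cap \(F=\sin s\), \(\ell_N=\pi-a\), the source metric is not positive on all star-horizontal planes, and no choice of \(\psi\) can repair this. Work at the boundary point \(s=\ell_N\), \(x=(i,0)\), \(u=1\). Take \(X\) to be the unit angular vector in the direction \((0,1)\), which lies in \(\ker K_x^*\). Take \(Y\) to be the unit angular vector in the direction \((j,0)\), for which \(K_x^*Y=2k\). By \eqref{eq:horizontalgraph}, \(X\) and \(Y+TY\) are star-horizontal, with \(TX=0\) and \(|TY|=2F/r\). The doubly warped curvature operator is block diagonal, so
\[
\cK\bigl(X\wedge(Y+TY)\bigr)=\frac{1-F'^2}{F^2}-\frac{4FF'r'}{r^3}
=1-2A_0F_a^2|\cos a|\,e^{-\eps A_0|\cos a|}.
\]
Here \(F=F_a\) and \(F'=|\cos a|\) come from your round profile. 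The values \(r=r_a\) and \(r'=r_aq_s\) are forced by the southern data. In particular \(r'/r^3=q_s/r_a^2\to A_0F_a/2\), an \(O(1)\) quantity that is independent of \(\psi\). For large \(A_0\) this is negative for small \(\eps\); the theorem allows arbitrary \(A_0>0\). Theorem~\ref{thm:southfilling} even requires \(A_0|\cos a|\ge\Lambda\ge1028\), so the expression is negative unless \(F_a^2<1/2056\). Thus the assertion that every star-horizontal plane is positive fails for your metric.

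Your proposed repairs do not work. The relation \eqref{eq:Plucker} constrains only the \(2\times2\) minors of \(\beta\). A rank-one mixed part such as \(\beta=X\otimes TY\) satisfies it with \(\gamma=0\), so \(m^2\lesssim hk\) is false. On the plane above, \(k=0\), \(h=1\) and \(m^2=4F^2/r^2\sim\eps^{-1}\). The graph bound \eqref{eq:mixedarea} converts your \(O(\eps)\,m^2\) loss into an \(O(1)\) loss \(8FF'r'/r^3\) on \(|X\wedge Y|^2\). That loss must be beaten by the intrinsic angular curvature, not by smallness of \(\eps\).

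The missing idea is that gluing only needs \(F(\ell_N)=F_a\) and \(F'(\ell_N)>0\), so \(F\) and \(\ell_N\) are free. The paper chooses them before \(\eps\), adapted to \(A_0F_a\): \(F'=e^{-F^2/(2\delta^2)}\) with \(\delta^3\le1/(128A_0F_a(1+C_\eta))\). Then \eqref{eq:elementary} gives \((1-F'^2)/F^2>FF'/(2\delta^3)\ge8FF'r'/r^3\). The rise of \(r'\) is confined to \(\delta/4\le s\le\delta/2\), where \(-F''/F\ge e^{-1/4}\delta^{-2}\) dominates \(4F^2(r'')_+/r^3\). Together these verify Lemma~\ref{lem:northcriterion}. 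The rest of your outline (smoothness at the centre, star invariance, the boundary form, O'Neill) is fine.
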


We first obtain a curvature criterion for the star-horizontal
planes, then choose the two warping functions and check the center.
The resulting quotient also satisfies the explicit lower bound
\eqref{eq:northlowerbound}.

\subsection{Horizontal planes and the curvature formula}
At \(x=(p,w)\in S^6\), let
\[
K_x:\operatorname{Im}\Hh\longrightarrow T_xS^6
\]
be the infinitesimal conjugation action. Splitting \(w\) into its
imaginary and real parts, in that order, gives
\[
K_x\xi=2(\xi\times p,\xi\times\operatorname{Im}w,0),
\qquad \|K_x\|_{\rm op}\le2.
\]
The bound follows from
\(|p|^2+|\operatorname{Im}w|^2\le1\). We write \(K_x^*\) for
the adjoint with respect to \(Q\) and the unit round metric
\(h_{S^6}\).

Principal-right translation is an isometry commuting with the star
action, so it suffices to work at \(u=1\). In orthonormal
angular and fibre components a star generator is \((FK_x\xi,r\xi)\).
A star-horizontal vector is therefore of the form
\begin{equation}\label{eq:horizontalgraph}
\lambda\partial_s+X+U,\qquad
U=TX,\qquad T=-\frac F rK_x^*,\qquad \|T\|_{\rm op}\le\frac{2F}{r}.
\end{equation}
Principal-right translation transports this description isometrically
to every point of the fibre.

For a second horizontal vector \(\mu\partial_s+Y+V\), with \(V=TY\),
we have
\begin{align}
|\lambda V-\mu U|^2
  &\le\frac{4F^2}{r^2}|\lambda Y-\mu X|^2,
\label{eq:radialarea}\\
|X\otimes V-Y\otimes U|^2
  &\le\frac{8F^2}{r^2}|X\wedge Y|^2.
\label{eq:mixedarea}
\end{align}
The first inequality is the operator-norm estimate for \(T\).
To prove the second when \(X,Y\) are independent, replace the pair
by an area-preserving linear change for which they are orthogonal.
The bivector and the tensor on the left are unchanged. In the new
pair the left side is
\[
|X|^2|TY|^2+|Y|^2|TX|^2
          \le2\|T\|_{\rm op}^2|X|^2|Y|^2.
\]
For linearly dependent \(X,Y\), both sides of
\eqref{eq:mixedarea} vanish. These estimates express the vertical
area terms in terms of the projected base areas.

For arbitrary vectors as above in the doubly warped source,
the full sectional numerator is
\begin{align}
\cK={}&-\frac{F''}{F}|\lambda Y-\mu X|^2
       -\frac{r''}{r}|\lambda V-\mu U|^2
       +\frac{1-F'^2}{F^2}|X\wedge Y|^2\notag\\
 &+\frac{1-r'^2}{r^2}|U\wedge V|^2
       -\frac{F'r'}{Fr}|X\otimes V-Y\otimes U|^2.
\label{eq:warpedcurvature}
\end{align}
To obtain this formula, take orthonormal frames in the two round
factors and evaluate at a point where their intrinsic frames are
normal:
\[
\begin{gathered}
\nabla_{\partial_s}E_i=\nabla_{\partial_s}E_a=0,\qquad
\nabla_{E_i}\partial_s=(F'/F)E_i,\qquad
\nabla_{E_a}\partial_s=(r'/r)E_a,\\
\nabla_{E_i}E_j=-(F'/F)\delta_{ij}\partial_s,\qquad
\nabla_{E_a}E_b=-(r'/r)\delta_{ab}\partial_s,\qquad
\nabla_{E_i}E_a=\nabla_{E_a}E_i=0.
\end{gathered}
\]
Differentiating the connection formulas and including the intrinsic
curvature tensors of the two round factors gives a curvature operator
that is
diagonal across the radial--first-factor, radial--second-factor,
first-factor, second-factor and mixed-factor bivector blocks.
Their coefficients are exactly the five coefficients in
\eqref{eq:warpedcurvature}.

\begin{lemma}[Horizontal curvature criterion]\label{lem:northcriterion}
Suppose \(F,r>0\), \(F',r'\ge0\), and
\begin{equation}\label{eq:northcriterion}
\frac{1-F'^2}{F^2}>\frac{8FF'r'}{r^3},\qquad
-\frac{F''}{F}>\frac{4F^2(r'')_+}{r^3},\qquad r'<1.
\end{equation}
Then every star-horizontal two-plane has positive sectional curvature.
\end{lemma}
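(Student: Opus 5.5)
The plan is to evaluate the sectional numerator \eqref{eq:warpedcurvature} on a pair of star-horizontal vectors. The vertical area terms will be controlled by the graph estimates \eqref{eq:radialarea} and \eqref{eq:mixedarea}, and the result grouped into two nonnegative multiples of the projected base areas. By the description \eqref{eq:horizontalgraph}, and after principal-right translation to \(u=1\), a star-horizontal two-plane is spanned by
\[
\lambda\partial_s+X+TX,\qquad \mu\partial_s+Y+TY .
\]
So \(U=TX\), \(V=TY\), and \(\|T\|_{\rm op}\le 2F/r\). Since \(F,r>0\) is assumed, we work at points with \(s>0\), where this description is valid.

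\emph{Radial block.} Because \(r>0\), we have \(-\frac{r''}{r}\ge-\frac{(r'')_+}{r}\). Combining this with \eqref{eq:radialarea} gives
\[
-\frac{r''}{r}|\lambda V-\mu U|^2\ \ge\ -\frac{4F^2(r'')_+}{r^3}\,|\lambda Y-\mu X|^2 .
\]
Adding the first term of \eqref{eq:warpedcurvature}, the total radial contribution is at least
\[
\Bigl(-\frac{F''}{F}-\frac{4F^2(r'')_+}{r^3}\Bigr)|\lambda Y-\mu X|^2 .
\]
By the second inequality of \eqref{eq:northcriterion}, this is a strictly positive coefficient times \(|\lambda Y-\mu X|^2\).

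\emph{Angular and mixed blocks.} The hypotheses \(F',r'\ge0\) make the mixed coefficient \(-F'r'/(Fr)\) nonpositive. We may therefore bound the mixed term from below using \eqref{eq:mixedarea}:
\[
-\frac{F'r'}{Fr}|X\otimes V-Y\otimes U|^2\ \ge\ -\frac{8FF'r'}{r^3}|X\wedge Y|^2 .
\]
Adding the angular term, the coefficient of \(|X\wedge Y|^2\) becomes
\[
\frac{1-F'^2}{F^2}-\frac{8FF'r'}{r^3},
\]
which is positive by the first inequality of \eqref{eq:northcriterion}. The purely vertical term \(\frac{1-r'^2}{r^2}|U\wedge V|^2\) is nonnegative because \(0\le r'<1\), and we simply discard it.

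\emph{Strictness.} The preceding steps give
\[
\cK\ \ge\ c_1|\lambda Y-\mu X|^2+c_2|X\wedge Y|^2
\]
with \(c_1,c_2>0\) at the given point. It remains to see that the right side cannot vanish on a genuine two-plane, and I expect this to be the only delicate point. The key observation is that the horizontal vectors are graphs over the base components \((\lambda,X)\). Hence linear independence of the two lifted vectors forces \(\lambda\partial_s+X\) and \(\mu\partial_s+Y\) to be independent, so the base bivector
\[
\partial_s\wedge(\lambda Y-\mu X)+X\wedge Y
\]
is nonzero. Its two components lie in orthogonal blocks, so at least one of \(|\lambda Y-\mu X|\) and \(|X\wedge Y|\) is positive, and therefore \(\cK>0\). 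Dividing by \(|\xi|^2\) gives positive sectional curvature on every star-horizontal plane. The center \(s=0\), where \(F=0\), lies outside the hypotheses and is to be treated separately, as in the smoothness discussion for Theorem~\ref{thm:northfilling}.
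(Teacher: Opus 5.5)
Your proposal is correct and follows the paper's proof: you insert \eqref{eq:radialarea} and \eqref{eq:mixedarea} into \eqref{eq:warpedcurvature}, using \((r'')_+\) for the radial--fibre term, discard the nonnegative vertical term, and get strictness because the projection of the graph \eqref{eq:horizontalgraph} onto \(\Rr\partial_s\oplus T_xS^6\) is injective. Your additional remarks only make explicit what the paper leaves implicit: that \(F',r'\ge0\) makes \(-F'r'/(Fr)\le0\), so the upper bound \eqref{eq:mixedarea} can be applied, and that the center \(F=0\) lies outside the lemma's hypotheses and is handled separately.
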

\begin{proof}
Insert \eqref{eq:radialarea} and \eqref{eq:mixedarea} into
\eqref{eq:warpedcurvature}. The possible negative radial--fibre
term is bounded using \((r'')_+=\max\{r'',0\}\).
The inequalities leave strictly positive coefficients of
\[
|\lambda Y-\mu X|^2,\qquad |X\wedge Y|^2,
\]
and a nonnegative remaining contribution from \(|U\wedge V|^2\).
The first two areas cannot both vanish on an independent horizontal
pair. Indeed, projection of the graph \eqref{eq:horizontalgraph}
to \(\Rr\partial_s\oplus T_xS^6\) is injective; their simultaneous
vanishing would make its two projected vectors, and hence the
original two vectors, dependent. This proves the lemma.
\end{proof}

\begin{remark}[The two horizontal distributions]\label{rem:horizontaldistributions}
The principal-horizontal space of \eqref{eq:north} is tangent to the
base disk. The star-horizontal space is the graph
\eqref{eq:horizontalgraph}. On this graph, \eqref{eq:radialarea}
and \eqref{eq:mixedarea} bound the radial--fibre and mixed angular
contributions by base-area terms. Lemma~\ref{lem:northcriterion}
therefore allows \(r''\) to be positive. This accommodates a radius
that is constant near the center and satisfies
\(r'(\ell_N)=r_aq_s>0\) at the boundary.
\end{remark}

\subsection{Choice of the warping functions}

The prescribed positive boundary fibre slope explains the different
roles of \(F\) and \(r\). A radius that is constant near the
center and has positive derivative at the boundary cannot be
concave throughout the disk. On star-horizontal planes, the
resulting radial loss is bounded by \(4F^2(r'')_+/r^3\), whereas
the angular loss is bounded by \(8FF'r'/r^3\). Small fibres alone
do not make the latter harmless: at the boundary,
\[
 \frac{r'(\ell_N)}{r(\ell_N)^3}
 =\frac{q_s}{r_a^2}\longrightarrow\frac{A_0F_a}{2}
 \qquad(\eps\downarrow0).
\]
We therefore choose the angular geometry before the common fibre
scale.

The profile \eqref{eq:F} provides radial curvature of order
\(\delta^{-2}\) where the fibre slope changes, and makes \(F'\)
decay rapidly as \(F/\delta\) grows. We confine the change of
fibre slope to \(\delta/4\le s\le\delta/2\). On this interval
the radial loss is of order at most \(A_0F_aC_\eta\delta\);
outside it \(r''=0\), so this loss vanishes. For the angular
coefficient, the useful ratio is
\[
 \frac{(1-F'^2)/F^2}{FF'}
 =\delta^{-3}\frac{1-e^{-x^2}}{x^3e^{-x^2/2}},
 \qquad x=F/\delta.
\]
The positive lower bound in \eqref{eq:elementary} shows why
choosing \(\delta^3A_0F_a\) small controls the angular loss on
the whole disk. The estimates below make both comparisons
quantitative. Although this profile can produce a long interval,
its length is finite and fixed before \(\eps\) is chosen.

Fix a smooth nondecreasing function \(\eta:\Rr\to[0,1]\) with
\[
\eta=0\ \text{on }(-\infty,1/4],\qquad
\eta=1\ \text{on }[1/2,\infty).
\]
Let \(C_\eta=\sup|\eta'|\). For the fixed \(A_0,F_a\), choose
\begin{equation}\label{eq:delta}
0<\delta\le F_a/2,\qquad
\delta^3\le\frac{1}{128A_0F_a(1+C_\eta)}.
\end{equation}
Define \(F\) by
\begin{equation}\label{eq:F}
F'=e^{-F^2/(2\delta^2)},\qquad F(0)=0,
\qquad
\ell_N=\int_0^{F_a}e^{z^2/(2\delta^2)}\,dz.
\end{equation}
Equivalently, \(s=\int_0^{F(s)}e^{z^2/(2\delta^2)}\,dz\).
This defines a smooth increasing solution with
\(F(\ell_N)=F_a\). Both \(F\) and the finite length \(\ell_N\)
are determined by \(F_a,\delta\).

For the values \(r_a,q_s\) in \eqref{eq:boundarydata}, put
\begin{equation}\label{eq:r}
d=r_aq_s,\qquad
r(s)=r_a-d\int_s^{\ell_N}\eta(v/\delta)\,dv.
\end{equation}
For the northern filling, require
\begin{equation}\label{eq:epsnorth}
q_s\ell_N\le\frac12,\qquad d<1.
\end{equation}
These are upper bounds satisfied for all sufficiently small
\(\eps>0\), since \(q_s=O(\eps)\) and \(d=O(\eps^{3/2})\), with
all other parameters fixed. We obtain
\begin{equation}\label{eq:rproperties}
\frac{r_a}{2}\le r\le r_a,\qquad
r'=d\eta(s/\delta)\ge0,\qquad
r''=\frac d\delta\eta'(s/\delta)\ge0.
\end{equation}
Since \(\ell_N\ge F_a\ge2\delta\), the function \(\eta(s/\delta)\)
is one near the outer boundary. Consequently
\begin{equation}\label{eq:rjet}
r(\ell_N)=r_a,\qquad \frac{r'(\ell_N)}{r_a}=q_s.
\end{equation}

We verify \eqref{eq:northcriterion} on the whole interval
\((0,\ell_N]\). First, \eqref{eq:rproperties} gives
\begin{equation}\label{eq:density}
\frac d{r^3}\le\frac{8q_s}{r_a^2}
       =4A_0F_a e^{-\eps A_0|\cos a|}\le4A_0F_a.
\end{equation}
Differentiating \eqref{eq:F} gives
\[
F''=-\frac F{\delta^2}(F')^2.
\]
Thus
\begin{equation}\label{eq:Fcurvature}
-\frac{F''}{F}=\delta^{-2}e^{-F^2/\delta^2},\qquad
\frac{1-F'^2}{F^2}=\frac{1-e^{-F^2/\delta^2}}{F^2}.
\end{equation}
The support of \(r''\) is contained in \(s\le\delta/2\).
Since \(F'\le1\), we have \(F(s)\le s\), and hence there
\[
-\frac{F''}{F}\ge e^{-1/4}\delta^{-2},
\qquad
\frac{4F^2r''}{r^3}\le4A_0F_aC_\eta\delta
 \le\frac{C_\eta}{32(1+C_\eta)}\delta^{-2}.
\]
This proves the radial inequality. Off the support of \(r''\)
its right side vanishes and its left side remains strictly positive.

For the angular inequality, for every \(x>0\) we have
\begin{align}
\frac{1-e^{-x^2}}{x^3e^{-x^2/2}}
 &=\frac{2\sinh(x^2/2)}{x^3}
 \ge\frac1x+\frac{x^3}{24}
 \ge\frac4{3\,8^{1/4}}>\frac12.
\label{eq:elementary}
\end{align}
The first bound follows from the first two nonzero terms in the
power series of \(\sinh\), whose subsequent terms are positive.
The minimum of \(x^{-1}+x^3/24\) occurs at \(x^4=8\), giving
the second bound. Substitution of \(x=F/\delta\) gives
\[
\frac{1-F'^2}{F^2}>
       \frac{FF'}{2\delta^3}>
       32A_0F_aFF'\ge\frac{8FF'r'}{r^3},
\]
where we used \eqref{eq:delta}, \eqref{eq:density}, and \(r'\le d\).
Finally, \(r'\le d<1\), so all three inequalities in
\eqref{eq:northcriterion} hold.

\subsection{The center and the quotient}
The solution \(F\) extends to an odd smooth function near zero,
with \(F'(0)=1\). Indeed, it is the local inverse of the odd smooth function \(\int_0^F e^{z^2/(2\delta^2)}\,dz\), whose
derivative at zero is one. These are the polar smoothness conditions
for \(ds^2+F^2h_{S^6}\). More explicitly,
\[
 F(s)=s-\frac{s^3}{6\delta^2}+O(s^5).
\]
The function \(r\) is constant near zero
by the choice of \(\eta\), so the entire source metric is smooth
there.

Both expressions in \eqref{eq:Fcurvature} tend to
\(\delta^{-2}>0\) at the center. In a neighborhood of the center
the source is a product of this positively curved base with a
round \(S^3\) of constant radius. Its sectional numerator on
star-horizontal planes is positive, since projection of such a
plane to the base is injective. At the center itself the star orbit
is entirely in the fibre and the horizontal space is the whole
base tangent space. Thus positivity also holds at that point.

Lemma~\ref{lem:northcriterion} now proves positivity on all
star-horizontal planes throughout the source, including the
boundary. By \eqref{eq:ONeill}, the star quotient \(D_N\) is a
smooth positively curved compact disk. The estimates use the bound
\(\|K_x\|_{\rm op}\le2\), which holds on all of \(S^6\), including
points where the conjugation orbit has smaller dimension. More explicitly, at \(u=1\) the Gram matrix of the star
infinitesimal generators is
\[
 F^2K_x^*K_x+r^2\Id_{\operatorname{Im}\Hh}
 \ge r^2\Id_{\operatorname{Im}\Hh}>0.
\]
Thus a change in the rank of \(K_x\) does not make the star orbit
singular. The horizontal graph formula uses \(K_x^*\), not an
inverse of \(K_x\), and remains valid at every orbit type.

\subsection{A quantitative lower bound on the quotient}
We record a lower bound in terms of the chosen parameters. This also
quantifies the passage from the horizontal curvature estimate to the
quotient metric. Put
\[
 T_*:=F_a^2/\delta^2\ge4,\qquad
 p_*:=\delta^{-2}e^{-T_*},\qquad
 q_*:=\frac{1-e^{-T_*}}{2F_a^2}.
\]
Let \(P_{\mathrm{rad}},P_{\mathrm{ang}}\) be the two remaining
base-area coefficients after applying \eqref{eq:radialarea} and
\eqref{eq:mixedarea}:
\[
 P_{\mathrm{rad}}=-F''/F-4F^2(r'')_+/r^3,
 \qquad P_{\mathrm{ang}}=(1-F'^2)/F^2-8FF'r'/r^3.
\]
On the support of \(r''\), the radial estimates above give
\(P_{\mathrm{rad}}>15/(32\delta^2)>p_*\); off this support,
\(P_{\mathrm{rad}}=\delta^{-2}e^{-F^2/\delta^2}\ge p_*\).
For the angular term, \eqref{eq:delta}, \eqref{eq:density} and
\eqref{eq:elementary} imply
\[
 \frac{8FF'r'}{r^3}
 \le\frac{FF'}{4(1+C_\eta)\delta^3}
 <\frac12\frac{1-F'^2}{F^2}.
\]
The function \((1-e^{-y})/y\) is decreasing for \(y>0\), since
\((y+1)e^{-y}<1\). It follows that
\[
 P_{\mathrm{ang}}\ge\frac12\frac{1-e^{-F^2/\delta^2}}{F^2}\ge q_*.
\]
All expressions at the center are interpreted by their smooth limits.

The horizontal graph injection in \eqref{eq:horizontalgraph} has
squared operator norm at most
\[
 H_*:=1+16F_a^2/r_a^2,
\]
since \(F\le F_a\) and \(r\ge r_a/2\). Its induced map on second
exterior powers has squared operator norm at most \(H_*^2\).
Consequently, for any unit star-horizontal bivector, the sum of its
two base-area squares is at least \(H_*^{-2}\). The vertical-area term in \eqref{eq:warpedcurvature} is nonnegative.
O'Neill's formula therefore gives
\begin{equation}\label{eq:northlowerbound}
 \Ksec_{g_N}\ge c_N:=
 \left(1+\frac{16F_a^2}{r_a^2}\right)^{-2}
 \min\left\{\frac{e^{-F_a^2/\delta^2}}{\delta^2},
       \frac{1-e^{-F_a^2/\delta^2}}{2F_a^2}\right\}>0.
\end{equation}
For fixed parameters this bound is positive, and it extends to the
center by continuity.

\begin{proof}[Proof of Theorem~\ref{thm:northfilling}]
Choose \(\eta,\delta\) as above and define \(F,\ell_N\) by
\eqref{eq:F}. These choices depend on \(A_0,F_a\), not on
\(\eps\). Choose \(\eps_N>0\) so that \eqref{eq:epsnorth}
holds for every \(0<\eps<\eps_N\), and define \(r\) by
\eqref{eq:r}. Equations~\eqref{eq:rproperties} and
\eqref{eq:rjet} give its positivity, its constant value near the
center, and its prescribed endpoint value and derivative.
The preceding estimates verify Lemma~\ref{lem:northcriterion} on
\((0,\ell_N]\); the center calculation proves the smooth extension
and positivity there. The free star quotient is a marked disk by
Section~\ref{sec:carrier}, and O'Neill's formula gives
\eqref{eq:northlowerbound}. Finally, \eqref{eq:F} and
\eqref{eq:rjet} yield \eqref{eq:northendpoints}. Differentiating
\eqref{eq:north} in the outward direction \(\partial_s\) gives
\eqref{eq:northsourceform}.
\end{proof}

\section{Boundary compatibility and gluing}\label{sec:gluing}

The two filling theorems give metrics with the same source boundary
radii and fibre slope. We now compare their quotient boundaries under
the prescribed marked map. Denote the induced boundary metrics by
\(h_N,h_S\) and the outward second fundamental forms by \(B_N,B_S\).

\Needspace{12\baselineskip}
\begin{proposition}[Boundary compatibility]\label{prop:boundarycompatibility}
Fix \(n\in\mathbb Z\) and the data in
Theorems~\ref{thm:southfilling} and \ref{thm:northfilling}, using
the same \(a,A_0\). For any
\(0<\eps<\min\{\eps_S,\eps_N\}\), let \((D_S,g_S)\) and
\((D_N,g_N)\) be the resulting fillings. The marked map
\(\phi_n:\partial D_N\to\partial D_S\), represented by
\(\sigma^n\) in the extending disk coordinates, is a boundary
isometry:
\[
 h_N=\phi_n^*h_S=:h.
\]
With \(\mu_N=F'(\ell_N)/F_a>0\) and \(\mu_S=\cot a<0\),
\begin{equation}\label{eq:boundarymargin}
 B_N+\phi_n^*B_S\ge c_B h,
 \qquad c_B:=\frac{r_a^2}{r_a^2+4F_a^2}(\mu_N-\mu_S)>0.
\end{equation}
\end{proposition}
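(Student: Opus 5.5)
The plan is to reduce everything to the ten-dimensional source boundaries, expressed in the common north boundary gauge of Remark~\ref{rem:boundarycoordinates}. In that gauge, both \(\partial P_N\) and \(\partial P_S\) are \(S^6\times S^3\) with the same induced metric \(F_a^2h_{S^6}+r_a^2h_{S^3}\). For the south side this is Theorem~\ref{thm:southfilling}, since \(\omega=u^{-1}du\) on the collar; for the north side it is \eqref{eq:northendpoints}. In both cases the star action is given by the same formula \eqref{eq:star}, and it preserves the radial coordinate. Hence the star orbits lie in the boundary. Moreover, the star-vertical spaces and the star-horizontal spaces within the boundary, namely the graph \eqref{eq:horizontalgraph} with \(F=F_a\) and \(r=r_a\), coincide on the two sides. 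Consequently the quotient boundary metrics agree when both quotients are written in the north angular coordinate \(y_N\). By \eqref{eq:clutching}, passing to the extending south marking \(y_S=\sigma^n(y_N)\) gives exactly \(h_N=\phi_n^*h_S\).

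For the second fundamental forms I would prove a short submersion lemma. The outward unit normal \(n_{\rm out}\) of each source (\(\partial_s\) in the north, \(-\partial_t\) in the south) is star-invariant and orthogonal to the orbits. It is therefore a basic star-horizontal field projecting to the outward normal of the quotient disk. Take a boundary tangent vector \(Y\) of the quotient and let \(\widetilde Y\) be its basic horizontal lift, which is tangent to the source boundary. O'Neill's identity \((\nabla_{\widetilde Y}\widetilde n)^{\hor_\star}=\widetilde{\nabla_Y n}\) then gives
\[
B_{\rm quot}(Y,Y)=\mathcal B_{\rm source}(\widetilde Y,\widetilde Y).
\]
Thus \(B_N+\phi_n^*B_S\), evaluated on \(Y\), equals \(\mathcal B_N+\mathcal B_S\) on the common lift \(Z=X+U\) with \(U=TX\). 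By \eqref{eq:northsourceform} and \eqref{eq:southsourceform} this sum is
\[
(\mu_N-\mu_S)|X|^2+(q_s-q_s)|U|^2=(\mu_N-\mu_S)|X|^2 .
\]
The fibre terms cancel because the south radius was built with signed slope matching \(r'(\ell_N)/r_a=q_s\), and the two outward normals are opposite.

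It then remains to bound \(|X|^2\) from below by \(h(Y,Y)=|X|^2+|TX|^2\). The operator bound \(\|T\|_{\rm op}\le 2F_a/r_a\) from \eqref{eq:horizontalgraph} gives
\[
h(Y,Y)\le\bigl(1+4F_a^2/r_a^2\bigr)|X|^2,
\]
which yields \eqref{eq:boundarymargin} with the stated \(c_B\). Positivity of \(c_B\) follows from \(\mu_N>0\) (Theorem~\ref{thm:northfilling}) and \(\mu_S=\cot a<0\).

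The step I expect to need the most care is the justification that the quotient second fundamental form equals the source form on horizontal lifts. Here I would check two things: that the normal is basic, and that the vertical part of \(\nabla_{\widetilde Y}\widetilde n\) drops out when paired with the horizontal \(\widetilde Y\). A further check is that on the south side the north-gauge description genuinely holds on the whole collar, so that the horizontal graphs on the two sides are literally the same subspaces. Only then does the comparison under \(\sigma^n\) reduce to the identity.
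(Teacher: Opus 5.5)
Your proposal is correct and takes essentially the same route as the paper. Both arguments use the common north-gauge source boundary metric and star action to get $h_N=\phi_n^*h_S$ via \eqref{eq:clutching}. Both identify the quotient second fundamental forms with the source forms \eqref{eq:northsourceform}, \eqref{eq:southsourceform} on star-horizontal lifts, note that the fibre terms cancel, and then bound $|X|^2$ below relative to $h$.

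The one minor difference is in that last step. You bound $|TX|^2$ using the operator norm $\|T\|_{\rm op}\le 2F_a/r_a$. The paper instead diagonalizes $K_xK_x^*$ to obtain the exact eigenvalues \eqref{eq:sum-eigenvalues}, which also shows that $c_B$ is attained at $x=(i,0)$.
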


We establish the metric equality in the common north gauge and then
compute the second fundamental forms on the same horizontal lifts.

\subsection{The common boundary metric}
In the north gauge the two source boundary metrics are both
\begin{equation}\label{eq:sourceboundary}
F_a^2h_{S^6}+r_a^2h_{S^3}.
\end{equation}
The star actions agree, so the induced quotient boundary metrics
are exactly equal under the identity in that gauge.
Write \(\varpi_\partial(x,u)=u^{-1}xu\) for the boundary
quotient map in the common angular marking. At \(u=1\), its
differential in orthonormal components is
\[
d\varpi_\partial(X,U)=F_a^{-1}X-r_a^{-1}K_xU.
\]
The source inner product in these components is the identity.
Hence, for \(L=(F_a^{-1}\Id,-r_a^{-1}K_x)\), the quotient
cometric is \(LL^*\), namely
\begin{equation}\label{eq:cometric}
h^{-1}=F_a^{-2}h_{S^6}^{-1}+r_a^{-2}K_xK_x^*.
\end{equation}
By Remark~\ref{rem:boundarycoordinates}, the identity in this
trivialization is represented by \(\sigma^n\) in the extending disk
coordinates.

\subsection{The second fundamental forms}
Denote the boundary isometry by
\(\phi_n:\partial D_N\to\partial D_S\). All comparisons below are
on \(\partial D_N\), with the southern tensors pulled back by
\(\phi_n\). Let \(Y\ne0\) be a tangent vector to this boundary, and
write its common horizontal lift as \(X+U\). The outward unit
normal is \(\partial_s\) on the north and \(-\partial_t\) on
the south. These normals are horizontal for the star submersions.
The horizontal component of the covariant derivative of a basic
horizontal normal projects to the quotient covariant derivative.
Taking its inner product with the horizontal lift of a boundary
vector identifies the quotient second fundamental form with the
source form restricted to horizontal lifts. Since \(A_N=0\) on the
boundary collar, the source forms follow by differentiating the two
warping factors. With
\[
\mu_N=\frac{F'(\ell_N)}{F_a}>0,\qquad \mu_S=\cot a<0,
\]
equation \eqref{eq:rjet} gives
\begin{align}
B_N(Y,Y)&=\mu_N|X|^2+q_s|U|^2,\notag\\
(\phi_n^*B_S)(Y,Y)&=-\mu_S|X|^2-q_s|U|^2.
\label{eq:forms}
\end{align}
Their sum is therefore
\begin{equation}\label{eq:sum}
(B_N+\phi_n^*B_S)(Y,Y)=(\mu_N-\mu_S)|X|^2>0.
\end{equation}
By \eqref{eq:horizontalgraph}, a nonzero horizontal vector has
\(X\ne0\). Together with \(\mu_N>0>\mu_S\), this proves strict
positive definiteness.

To express the eigenvalues relative to \(h\), let \(b_j\) be the
eigenvalues of \(K_xK_x^*\), so \(0\le b_j\le4\).
For \(X\) in a corresponding eigendirection,
\[
|Y|_h^2=|X|^2+|U|^2
       =\left(1+\frac{F_a^2b_j}{r_a^2}\right)|X|^2.
\]
The eigenvalues of the shape sum relative to \(h\) are thus
\begin{equation}\label{eq:sum-eigenvalues}
\frac{r_a^2}{r_a^2+F_a^2b_j}(\mu_N-\mu_S)>0.
\end{equation}
Since \(b_j\le4\), these eigenvalues are bounded below by the
constant \(c_B\) in \eqref{eq:boundarymargin}. At
\(x=(i,0)\), the map \(K_xK_x^*\) has eigenvalue \(4\), so
\(c_B\) is attained. On \(\ker K_x^*\), the eigenvalue of the
shape sum is \(\mu_N-\mu_S=\mu_N+|\cot a|\).

\begin{proof}[Proof of Proposition~\ref{prop:boundarycompatibility}]
Theorems~\ref{thm:southfilling} and \ref{thm:northfilling} give the
common source boundary metric \eqref{eq:sourceboundary}. Taking the
quotient by the common star action gives the boundary isometry above,
whose expression in the extending coordinates is \(\sigma^n\) by
\eqref{eq:clutching}. The horizontal restriction of the source forms
\eqref{eq:southsourceform} and \eqref{eq:northsourceform} gives
\eqref{eq:forms}. Their sum and its eigenvalues are
\eqref{eq:sum} and \eqref{eq:sum-eigenvalues}, proving
\eqref{eq:boundarymargin}.
\end{proof}

\subsection{The compatible filling theorem}
\begin{proof}[Proof of Theorem~\ref{thm:fillings}]
Fix \(n\in\mathbb Z\). Choose \(a,b_0,\chi\) and the southern
connection, which determines \(M_0,M_1\). Choose \(\Lambda,A_0\)
as in Theorem~\ref{thm:southfilling}; this theorem supplies a
family of southern fillings for \(0<\eps<\eps_S\).
With this same \(a,A_0\), Theorem~\ref{thm:northfilling}
supplies \(F,\ell_N\) and a family of northern fillings for
\(0<\eps<\eps_N\). The northern angular profile is fixed before
\(\eps\), so both thresholds are determined before making the
common choice
\[
 0<\eps<\min\{\eps_S,\eps_N\}.
\]
In particular, \(\eps\) satisfies
\eqref{eq:smallA}--\eqref{eq:smallC} and \eqref{eq:epsnorth}
for the fixed data.

Theorems~\ref{thm:southfilling} and \ref{thm:northfilling} give
smooth positive metrics on the entire marked disks for this same
\(\eps\). Proposition~\ref{prop:boundarycompatibility} gives
\(h_N=\phi_n^*h_S\) and \(B_N+\phi_n^*B_S>0\) under their
prescribed marked identification. These are precisely the conclusions
of Theorem~\ref{thm:fillings}.
\end{proof}

\subsection{Positive gluing and the main theorem}
The \(k=1\) case of the Reiser--Wraith gluing theorem
\cite[Theorem~A(i)]{RW} concerns positive sectional curvature.
It assumes that the boundary metrics are isometric and that the sum
of the second fundamental forms is positive semidefinite. We use the
following strict case. With inward normal \(N=-n_{\rm out}\), our
convention is \(B(Y,Z)=-\langle\nabla_YN,Z\rangle\), which agrees
with the convention in~\cite{RW}.

\Needspace{12\baselineskip}
\begin{lemma}[Positive gluing]\label{lem:gluing}\label{thm:RW}
Let \((M_N,g_N)\) and \((M_S,g_S)\) be smooth Riemannian manifolds
of the same dimension at least two, with compact boundaries and
strictly positive sectional curvature, including at the boundaries.
Let \(h_N,h_S\) and \(B_N,B_S\) be their induced boundary metrics
and outward second fundamental forms, with convention
\eqref{eq:boundaryconvention}. Suppose a boundary diffeomorphism
\(\phi:\partial M_N\to\partial M_S\) satisfies
\[
 h_N=\phi^*h_S=:h,\qquad B_N+\phi^*B_S\ge b_*h
 \quad\text{for some }b_*>0.
\]
Then the glued smooth manifold \(M_N\cup_\phi M_S\) admits a smooth
metric of strictly positive sectional curvature. For every
neighborhood of the identified boundary, the metric may be chosen
to agree with \(g_N\) and \(g_S\) outside that neighborhood.
\end{lemma}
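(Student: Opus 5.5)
The plan is to follow the Perelman/Kronheimer-style "bending and smoothing" route, in the form used by Reiser--Wraith. First work in Fermi coordinates near each boundary. Let \(\rho\) denote inward distance, so that near \(\partial M_\alpha\) we have \(g_\alpha=d\rho^2+h^\alpha_\rho\), with \(h^\alpha_0=h\) (after pulling back by \(\phi\) on the south) and \(\partial_\rho h^\alpha_\rho|_{0}=-2B_\alpha\).

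Glue the two collars along a signed coordinate \(\tau\in(-\delta,\delta\)\), with \(\tau=-\rho\) on the north and \(\tau=\rho\) on the south. This produces a \(C^0\) metric \(d\tau^2+k_\tau\) on the glued manifold. Its one-sided \(\tau\)-derivatives at \(\tau=0\) are \(2B_N\) and \(-2B_S\). The jump \(\partial_\tau k|_{0^-}-\partial_\tau k|_{0^+}=2(B_N+\phi^*B_S)\ge 2b_*h\) is a positive definite "concave kink." This is the favorable sign: a distributional second derivative \(-2(B_N+B_S)\delta_0\) enters the radial curvature with a positive sign.

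Second, smooth the kink. Replace \(k_\tau\) on \(|\tau|<\lambda\) by
\[
\tilde k_\tau=k^N_\tau+\bigl(k^S_\tau-k^N_\tau\bigr)\,\beta(\tau/\lambda),
\]
where \(k^\alpha_\tau\) are the smooth extensions of the two collar families slightly past \(\tau=0\). Such extensions exist by extending \(g_N,g_S\) to slightly larger open manifolds, and positive curvature persists on a small extension by continuity. Here \(\beta\) is a smooth monotone step. The difference \(k^S_\tau-k^N_\tau=O(\tau)\) has linear coefficient \(-2(B_N+\phi^*B_S)\). The second \(\tau\)-derivative of \(\tilde k\) then equals
\[
-2(B_N+\phi^*B_S)\lambda^{-1}\beta'(\tau/\lambda)+O(1),
\]
with the \(O(1)\) uniform in \(\lambda\). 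Meanwhile \(\tilde k\) and its first tangential and \(\tau\) derivatives stay uniformly bounded, and \(\tilde k\) is \(C^0\)-close to \(h\).

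Third, estimate the curvature using the Gauss/Riccati formulas for \(d\tau^2+\tilde k_\tau\):
\[
\langle R(Z,\partial_\tau)\partial_\tau,Z\rangle=-\tfrac12\partial_\tau^2\tilde k(Z,Z)+\tfrac14|\partial_\tau\tilde k(Z,\cdot)|^2,
\]
and tangential and mixed terms equal to the intrinsic curvature of \(\tilde k_\tau\) plus quadratic and derivative expressions in \(\partial_\tau\tilde k\). The radial–tangential planes gain the large positive term \(\lambda^{-1}\beta'(B_N+B_S)(Z,Z)\ge\lambda^{-1}\beta' b_*|Z|^2\). All other contributions are bounded and close, for small \(\lambda\), to convex combinations of curvature terms of \(g_N\) and \(g_S\). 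For tangential planes, I would use the Gauss equation: the intrinsic curvature of \(\tilde k_\tau\) is close to that of \(h\), and the extrinsic term is quadratic in the interpolated second fundamental form \((1-\beta)B_N-\beta\phi^*B_S\). The Gauss-equation numerator
\[
\mathrm{sec}_h(Y,W)-\det B|_{\mathrm{span}}
\]
has the form of the ambient curvature. Its interpolation must be shown to remain positive.

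This last point is the main obstacle. The quadratic Gauss term is not convex in \(\beta\), so positivity of both endpoints does not immediately give positivity of the interpolation. Following Reiser--Wraith, I would instead not interpolate linearly. First I would deform each side near its boundary to a metric whose second fundamental form is a multiple \(c\,h\) plus a small error. This can be done by bending each collar with a warping \(d\rho^2+f(\rho)^2h_\rho\)-type conformal change, keeping positive curvature thanks to the strict margin \(b_*\). Then only the intrinsic curvature of \(h\) plus a scalar Gauss term \(-c_N c_S\)-type interpolation remains, and it can be checked directly. Mixed planes \(\partial_\tau\wedge Z+W\wedge Z'\) are handled by completing the square. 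The large term \(\lambda^{-1}\beta'\) dominates the bounded cross terms wherever \(\beta'\) is not small. Where \(\beta'\) is small, the metric is close to one of the original smooth metrics with positive curvature. Choosing \(\beta\) with \(\beta'\) bounded below on the middle region, with a uniform gap, then finishes the argument for \(\lambda\) small. The construction is local to \(|\tau|<\lambda\), which gives the last assertion.
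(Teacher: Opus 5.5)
There is a genuine gap in the interpolation step. The terms in which \(\partial_\tau\) falls on the cutoff are not lower order. Write \(s=\tau/\lambda\), \(P_0=\partial_\tau k^N|_{\tau=0}=2B_N\), \(Q_0=\partial_\tau k^S|_{\tau=0}=-2\phi^*B_S\) and \(\Delta=P_0-Q_0\), so that \(k^S_\tau-k^N_\tau=-\tau\Delta+O(\tau^2)\). On the strip this difference is \(O(\lambda)\), while the cutoff contributes \(\lambda^{-1}\beta'\) and \(\lambda^{-2}\beta''\). Differentiating your \(\tilde k\) gives
\[
 \partial_\tau\tilde k=P_0-\mu(s)\Delta+O(\lambda),\qquad
 \partial_\tau^2\tilde k=-\lambda^{-1}\mu'(s)\Delta+O(1),\qquad
 \mu(s)=\frac{d}{ds}\bigl(s\beta(s)\bigr).
\]
Your formulas are those of an interpolation of the first derivatives, but the \(\tilde k\) you define interpolates the metrics themselves. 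The true coefficient is \(\mu'=2\beta'+s\beta''\), not \(\beta'\); you dropped \(\lambda^{-2}\beta''(k^S-k^N)\). For every step \(\beta\), \(\mu'\) is negative on some \(s\)-interval independent of \(\lambda\). Indeed, \((s^2\beta')'=s\mu'\), and \(s^2\beta'\ge0\) vanishes at \(s=-1,0,1\), so it increases somewhere in \((-1,0)\) or decreases somewhere in \((0,1)\). Where \(\mu'\le-c<0\), the radial numerator for \(h\)-unit \(Z\) is \(\tfrac{1}{2\lambda}\mu'(s)\Delta(Z,Z)+O(1)\le-c\,b_*\lambda^{-1}+O(1)\), which tends to \(-\infty\). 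For the same reason \(\partial_\tau\tilde k\) is not \((1-\beta)P_0+\beta Q_0\), as your tangential analysis assumes. The effective parameter is \(\mu\), and \(\mu\) leaves \([0,1]\). In short, multiplying the difference of the metrics by a cutoff destroys the concavity of the kink. What you need is a monotone interpolation of the first derivatives. In your notation, the paper uses the cubic Hermite interpolant \(H_\tau\) on \([-\lambda,\lambda]\) matching the values and first derivatives of \(k^N\) at \(-\lambda\) and of \(k^S\) at \(\lambda\). For it, \(H'_\tau=(1-\theta)P_0+\theta Q_0+O(\lambda)\) with \(\theta=(\tau+\lambda)/(2\lambda)\in[0,1]\), and \(H''_\tau=-\Delta/(2\lambda)+O(1)\) uniformly. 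That metric is only \(C^1\) at \(\tau=\pm\lambda\), so the paper then mollifies the slice family in \(\tau\), using that the second-derivative part of the curvature is linear.

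With a genuinely convex interpolation of the derivatives, your ``main obstacle'' disappears and no bending is needed. For symmetric \(2\times2\) matrices,
\[
 \det\bigl((1-\theta)P+\theta Q\bigr)=(1-\theta)\det P+\theta\det Q-\theta(1-\theta)\det(P-Q).
\]
Moreover, \(\det_\Pi\Delta>0\) on every tangential plane, since \(\Delta\) is positive definite. Hence the Gauss equation gives \(\Ksec(\Pi)=(1-\theta)\Ksec_{g_N}(\Pi)+\theta\Ksec_{g_S}(\Pi)+\tfrac{\theta(1-\theta)}{4}\det_\Pi\Delta+O(\lambda)\), with the boundary curvatures on the right. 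The non-convexity of the quadratic term thus has the favourable sign, precisely because the jump is definite. Your detour is also not justified as sketched. A warping \(d\rho^2+f(\rho)^2h_\rho\) shifts the shape operator only by \(-f'(0)/f(0)\) times the identity, so it cannot make a non-umbilic \(B_\alpha\) umbilic. A short bending that keeps the radial curvature positive can, to leading order, only lower \(B_\alpha\). Then \(c_\alpha\) could be at most about the smallest eigenvalue of \(B_\alpha\), and these can have negative sum even when \(B_N+\phi^*B_S>0\). Once you have the tangential bound, the uniform radial reserve \(b_*/(4\lambda)-C_0\) and a \(\lambda\)-independent bound on the Codazzi block (which contains no second normal derivative), mixed planes follow by completing the square, as you intended.
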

\begin{proof}
See Appendix~\ref{app:gluing}.
\end{proof}

For the quotient disks, the signed normal coordinate is
\[
 z=s-\ell_N\le0\quad\text{on }D_N,\qquad
 z=t-a\ge0\quad\text{on }D_S.
\]
The radial fields are unit and star-horizontal, and the star orbits
are tangent to the radial slices. After pulling back the southern
collar by \(\phi_n\), the quotient metrics therefore have the form
\(dz^2+h_-(z)\) and \(dz^2+h_+(z)\), with
\[
 h_-(0)=h_+(0)=h,\qquad
 h_-'(0)-h_+'(0)=2(B_N+\phi_n^*B_S)\ge2c_Bh.
\]
The fibre radii have the common value \(r_a\) and common derivative
\(r_aq_s\) at \(z=0\). The angular derivatives give the positive
jump displayed above. Thus the two metrics define a continuous
metric with a corner along the identified boundary.

Fix the common \(\eps\) as in the proof of
Theorem~\ref{thm:fillings}. The curvature bounds \(c_S,c_N\) and
the boundary constant \(c_B\) are then positive fixed numbers.
The interpolation width \(\tau\) is chosen for these metrics,
and the smoothing width \(\nu\) is chosen after \(\tau\).

\begin{proof}[Proof of Theorem~\ref{thm:main}]
Choose an orientation on the given homotopy seven-sphere \(\Sigma\).
By the coordinate identification in Lemma~\ref{lem:topology} and
\cite[Theorem~1]{DPR}, with the orientation comparison in
Section~\ref{sec:carrier}, its oriented smooth type is represented by
\(\Sigma_n\) for some \(n\in\{0,\ldots,27\}\).
Theorem~\ref{thm:fillings} provides smooth positive metrics on the
two marked disks defining \(\Sigma_n\), with isometric boundaries
and a positive definite sum of outward second fundamental forms.
Their boundaries are compact, so Lemma~\ref{lem:gluing} applies
and gives a smooth positively curved metric on
\(D_N\cup_{\phi_n}D_S=\Sigma_n\). The quantitative version in~\cite[Corollary~B, \(k=1\)]{RW}
applies with lower threshold \(\tfrac12\min\{c_S,c_N\}\), since
both disk metrics have strictly larger sectional curvature. Thus the
smoothed metric may be chosen to satisfy
\[
 \Ksec_g>\tfrac12\min\{c_S,c_N\}>0.
\]
Pulling this metric back to \(\Sigma\) proves the theorem, with
\(c_\Sigma=\tfrac12\min\{c_S,c_N\}\).
\end{proof}

\Needspace{10\baselineskip}
\appendix
\section{The principal bundle \texorpdfstring{\(P_n\)}{Pn}}\label{app:principal}

We give the quotient-space construction underlying the clutching
description in Section~\ref{sec:carrier}, using the notation fixed
there.

For each \(n\in\mathbb Z\), the principal right
\(S^3\)-bundle
\[
 \pi_n:P_n\longrightarrow S^7
\]
has structure group \(S^3\), the group of unit quaternions under
multiplication. Its total space is
\[
 P_n:=\bigl((\mathcal U_N\times S^3)\sqcup
                 (\mathcal U_S\times S^3)\bigr)/{\sim_n},
\]
where \(\sqcup\) denotes disjoint union. Write \((y,u)_\alpha\)
for a point in the \(\alpha\)-th product, with \(\alpha\in\{N,S\}\).
The equivalence relation \(\sim_n\) is generated by identifying
\((y,u_N)_N\) with \((y,u_S)_S\) whenever the common base point is
\(y=(\cos t,\sin t\,x)\in\mathcal U_N\cap\mathcal U_S\) and
\begin{equation}\label{eq:principal-appendix}
u_S=g_n(x)u_N.
\end{equation}
Equip \(P_n\) with the quotient topology and denote these classes
by \([y,u]_\alpha\). Since the identifications preserve the base
point, the product projections induce a well-defined continuous map
\[
 \pi_n:P_n\longrightarrow S^7,\qquad
 \pi_n([y,u]_\alpha)=y.
\]

The canonical maps
\[
 \iota_\alpha:\mathcal U_\alpha\times S^3
       \longrightarrow\pi_n^{-1}(\mathcal U_\alpha),\qquad
 \iota_\alpha(y,u)=[y,u]_\alpha,
 \quad \alpha\in\{N,S\},
\]
are homeomorphisms onto open subsets covering \(P_n\).
The quotient is Hausdorff: points with different projections are
separated using the base, and points with the same projection are
separated in a common product trivialization. It is second countable
because these two open subsets are second countable. On the overlap,
\[
 (\iota_S^{-1}\circ\iota_N)(y,u)
       =(y,g_n(x)u),\qquad y=(\cos t,\sin t\,x).
\]
This transition is a smooth diffeomorphism, with inverse
\((y,u)\mapsto(y,g_n(x)^{-1}u)\). We give \(P_n\) the unique smooth
structure for which both \(\iota_N\) and \(\iota_S\) are
diffeomorphisms. Thus \(P_n\) is a smooth ten-dimensional manifold
without boundary, and \(\pi_n\) is a smooth locally trivial bundle
with fibre \(S^3\).

Define the \emph{principal right action} by
\[
 [y,u]_\alpha\cdot h:=[y,uh]_\alpha,\qquad h\in S^3.
\]
The identity \(g_n(x)(uh)=(g_n(x)u)h\) shows that this definition
is independent of the representative. In each trivialization it is
the standard smooth right multiplication on \(S^3\); hence it
preserves \(\pi_n\) and is free and transitive on every fibre.
Consequently, \(\pi_n:P_n\to S^7\), with these trivializations and
this action, is a smooth principal right \(S^3\)-bundle.

\section{Proof of the positive gluing lemma}\label{app:gluing}

We prove Lemma~\ref{lem:gluing} by the cubic interpolation of
Reiser--Wraith~\cite[Section~2]{RW}, followed by smoothing in normal
collars. Throughout the proof, the two metrics are fixed. The
interpolation width is denoted by \(\tau>0\).

\subsection{Signed collars and the boundary inequality}
If the common boundary is empty, the disjoint union of the two
original metrics already proves the lemma. Assume henceforth
that it is nonempty. Set \(X=\partial M_N\), and use \(\phi\) to pull the southern
collar back to \(X\). In signed normal coordinates, increasing
from the northern side to the southern side, the metrics take
the form
\begin{equation}\label{eq:glue-collars}
 g_N=dz^2+h_-(z)\quad(-r_0\le z\le0),\qquad
 g_S=dz^2+h_+(z)\quad(0\le z\le r_0),
\end{equation}
where \(h_-(0)=h_+(0)=h\). Here and below the notation \(g_S\)
on the collar includes this pullback. The families \(h_\pm\) are
smooth up to \(z=0\). Compactness of \(X\) allows a common
\(r_0>0\); the collars may be chosen inside any prescribed open
neighborhood of the identified boundary. These collars define the
usual smooth structure on the gluing by \(\phi\).

The outward normals are \(+\partial_z\) on the northern side
and \(-\partial_z\) on the southern side. Consequently
\begin{equation}\label{eq:glue-jump}
 P_0:=h_-'(0),\qquad Q_0:=h_+'(0),\qquad
 \Delta:=P_0-Q_0=2(B_N+\phi^*B_S)\ge2b_*h.
\end{equation}
Primes denote \(z\)-derivatives. All tensor estimates below are
uniform on \(X\), measured using \(h\) and a finite collection of
fixed coordinate charts. An estimate in \(C_X^j\) includes
all tangential derivatives of order at most \(j\), but not
normal derivatives. The latter will be stated separately.

\subsection{Cubic interpolation and its derivatives}
For \(0<\tau<r_0/4\), put
\[
 a_\tau=h_-(-\tau),\quad b_\tau=h_+(\tau),\quad
 P_\tau=h_-'(-\tau),\quad Q_\tau=h_+'(\tau),\quad
 D_\tau=\frac{b_\tau-a_\tau}{2\tau}.
\]
On \(-\tau\le z\le\tau\), define
\begin{align}
 H_z={}&\frac{z+\tau}{2\tau}b_\tau
        -\frac{z-\tau}{2\tau}a_\tau\notag\\
 &+\frac{(z-\tau)^2(z+\tau)}{4\tau^2}(P_\tau-D_\tau)
  +\frac{(z+\tau)^2(z-\tau)}{4\tau^2}(Q_\tau-D_\tau).
 \label{eq:glue-hermite}
\end{align}
This is a polynomial with values in
\(C^\infty(X,\operatorname{Sym}^2T^*X)\). Its endpoint data are
\begin{equation}\label{eq:glue-endpoint-jets}
 H_{-\tau}=a_\tau,\quad H_\tau=b_\tau,\quad
 H'_{-\tau}=P_\tau,\quad H'_\tau=Q_\tau.
\end{equation}
Differentiating the polynomial gives
\begin{align}
 H_z'={}&D_\tau+
 \frac{3z^2-\tau^2}{4\tau^2}(P_\tau+Q_\tau-2D_\tau)
 +\frac{z}{2\tau}(Q_\tau-P_\tau),
 \label{eq:glue-hermite-first}\\
 H_z''={}&\frac{3z}{2\tau^2}(P_\tau+Q_\tau-2D_\tau)
 +\frac{Q_\tau-P_\tau}{2\tau}.
 \label{eq:glue-hermite-second}
\end{align}
Taylor expansion of the fixed smooth families yields, in every
fixed tangential \(C^j\)-norm,
\begin{gather*}
 a_\tau=h-\tau P_0+O(\tau^2),\qquad
 b_\tau=h+\tau Q_0+O(\tau^2),\\
 P_\tau=P_0+O(\tau),\qquad Q_\tau=Q_0+O(\tau),\qquad
 D_\tau=\tfrac12(P_0+Q_0)+O(\tau).
\end{gather*}
In particular,
\(P_\tau+Q_\tau-2D_\tau=O(\tau)\) and
\(Q_\tau-P_\tau=-\Delta+O(\tau)\). Substitution into
\eqref{eq:glue-hermite}--\eqref{eq:glue-hermite-second}, with
\(\lambda=(z+\tau)/(2\tau)\), gives the uniform estimates
\begin{align}
 H_z&=h+O(\tau)&&\text{in }C_X^2,
 \label{eq:glue-estimate-metric}\\
 H_z'&=(1-\lambda)P_0+\lambda Q_0+O(\tau)
       &&\text{in }C_X^1,
 \label{eq:glue-estimate-first}\\
 H_z''&=-\frac{\Delta}{2\tau}+O(1)
       &&\text{in }C_X^0.
 \label{eq:glue-estimate-second}
\end{align}
For example, the first term on the right side of
\eqref{eq:glue-hermite-second} is \(O(1)\), since
\(|z|\le\tau\) and its bracket is \(O(\tau)\). To obtain
\eqref{eq:glue-estimate-first}, the \(O(1)\) part is
\((P_0+Q_0)/2+z(Q_0-P_0)/(2\tau)\), which is the stated
linear interpolation. The same calculations commute with
tangential differentiation because all scalar coefficients
depend only on \(z\).

After decreasing \(\tau\), \eqref{eq:glue-estimate-metric}
ensures \(\tfrac12h\le H_z\le2h\). Thus
\(G_\tau=dz^2+H_z\) is a Riemannian metric on the interpolation
strip. Extend it by \(g_N\) and \(g_S\) outside this strip.
Equation~\eqref{eq:glue-endpoint-jets}, as an identity of smooth
tensors on \(X\), matches both values and all first derivatives
at the two interfaces \(z=\pm\tau\). The extended metric is
therefore \(C^1\) and piecewise smooth. For a fixed \(\tau\),
it is locally \(C^{1,1}\), with bounded one-sided second
coordinate derivatives. We next estimate the classical curvature
on its smooth pieces and their one-sided limits.

\subsection{All curvature blocks in a normal collar}
Consider a smooth metric \(G=dz^2+H_z\). Let indices \(i,j,k,l\)
be tangent to \(X\), and let \(D^H\) be the Levi--Civita
connection of the slice metric \(H_z\). The Christoffel symbols
in the product coordinates are
\begin{equation}\label{eq:glue-christoffel}
 \Gamma^z_{ij}=-\tfrac12H'_{ij},\qquad
 \Gamma^i_{zj}=\tfrac12H^{ik}H'_{kj},\qquad
 \Gamma^i_{jk}=\Gamma^i_{jk}(H_z),\qquad
 \Gamma^A_{zz}=\Gamma^z_{zi}=0.
\end{equation}
Here a capital index can be tangential or normal. Write
\(R_{ABCD}=G(R(\partial_A,\partial_B)\partial_C,\partial_D)\).
Substituting
\eqref{eq:glue-christoffel} into the curvature definition gives
\begin{align}
 R_{ijkl}&=R^{H_z}_{ijkl}
       +\tfrac14(H'_{ik}H'_{jl}-H'_{il}H'_{jk}),
       \label{eq:glue-gauss}\\
 R_{izzj}&=-\tfrac12H''_{ij}
       +\tfrac14H'_{ik}H^{kl}H'_{lj},
       \label{eq:glue-radial}\\
 R_{ijkz}&=\tfrac12\bigl((D^H_jH')_{ik}-(D^H_iH')_{jk}\bigr).
       \label{eq:glue-codazzi}
\end{align}
For \eqref{eq:glue-gauss}, the normal Christoffel products
add the displayed quadratic term to the intrinsic curvature of
the slice. For \eqref{eq:glue-radial}, differentiating
\(H^{ik}H'_{kj}/2\) and using
\((H^{-1})'=-H^{-1}H'H^{-1}\) gives the stated expression
after lowering the remaining index. For
\eqref{eq:glue-codazzi}, the tangential derivatives of
\(-H'_{jk}/2\), together with the slice Christoffel terms,
combine into the two covariant derivatives shown. These three
blocks and the curvature symmetries describe the entire tensor:
a nonzero component has zero, one, or two normal indices, with
at most one normal index in either skew pair.

For a nonzero tangential vector \(u\), the radial formula reads
\begin{equation}\label{eq:glue-radial-sectional}
 \Ksec_G(u,\partial_z)=
 \frac{-\tfrac12H_z''(u,u)
  +\tfrac14H_z'(u,\cdot)H_z^{-1}H_z'(\cdot,u)}{H_z(u,u)}.
\end{equation}
Here \(\Ksec_G(u,v)\) means the sectional curvature of the
plane spanned by \(u,v\).

\subsection{Tangential planes and the determinant identity}
First suppose that the manifold dimension is at least three.
For a two-plane \(\Pi\subset T_xX\), choose an \(h\)-orthonormal
basis \(u,v\), extended with constant components in the product
direction. If \(S\) is a symmetric two-tensor, write
\(\det_\Pi S=S(u,u)S(v,v)-S(u,v)^2\). The Gauss equation gives
\begin{equation}\label{eq:glue-tangent-sectional}
 \Ksec_{G_\tau}(\Pi)
 =\Ksec_{H_z}(\Pi)
       -\frac{\det_\Pi H_z'}{4\det_\Pi H_z}.
\end{equation}
Since \(\det_\Pi h=1\),
\eqref{eq:glue-estimate-metric} implies
\(\det_\Pi H_z=1+O(\tau)\) and
\(\Ksec_{H_z}(\Pi)=\Ksec_h(\Pi)+O(\tau)\), uniformly over
the compact bundle of these planes. The normalization of the
basis makes the denominator uniformly bounded away from zero.

For two symmetric \(2\times2\) matrices \(P,Q\), direct
expansion of the determinant yields
\begin{equation}\label{eq:glue-determinant}
 \det((1-\lambda)P+\lambda Q)
 =(1-\lambda)\det P+\lambda\det Q
       -\lambda(1-\lambda)\det(P-Q).
\end{equation}
Apply this identity to the restrictions of \(P_0,Q_0\) to
\(\Pi\). Their difference is the restriction of \(\Delta\),
which is positive definite by \eqref{eq:glue-jump}, so
\(\det_\Pi\Delta>0\). Equations
\eqref{eq:glue-estimate-first} and
\eqref{eq:glue-tangent-sectional} give
\begin{align}
 \Ksec_{G_\tau}(\Pi)
 ={}&(1-\lambda)\Ksec_{g_N}(\Pi)
        +\lambda\Ksec_{g_S}(\Pi)\notag\\
 &+\frac{\lambda(1-\lambda)}4\det_\Pi\Delta+O(\tau).
 \label{eq:glue-tangent-bound}
\end{align}
The curvatures on the right are evaluated at the boundary,
with the southern plane transported by \(\phi\). Indeed,
their Gauss formulas are
\(\Ksec_h(\Pi)-\tfrac14\det_\Pi P_0\) and
\(\Ksec_h(\Pi)-\tfrac14\det_\Pi Q_0\), respectively.
The sign of the southern outward normal does not change this
determinant.

Both boundary curvatures are strictly positive. Compactness
therefore gives a common lower bound \(\kappa_\partial>0\)
over all tangential two-planes. Decreasing \(\tau\) once more
in \eqref{eq:glue-tangent-bound} yields
\begin{equation}\label{eq:glue-tangent-reserve}
 \Ksec_{G_\tau}(\Pi)\ge
 a_0:=\kappa_\partial/2>0
 \quad(\Pi\subset T_xX).
\end{equation}
The ambient boundary curvatures and the positive definite tensor
\(\Delta\) thus give the tangential lower bound.

\subsection{Radial and arbitrary two-planes}
For an \(H_z\)-unit tangential vector \(u\),
\eqref{eq:glue-estimate-second} and
\eqref{eq:glue-radial-sectional} imply
\[
 \Ksec_{G_\tau}(u,\partial_z)
 =\frac{\Delta(u,u)}{4\tau}+O(1).
\]
The error is uniform: \(H_z'\), \(H_z^{-1}\), and the
remainder in \eqref{eq:glue-estimate-second} are bounded
independently of sufficiently small \(\tau\). Moreover,
\(H_z\le2h\) and \(H_z(u,u)=1\) give \(h(u,u)\ge1/2\),
so \(\Delta(u,u)\ge2b_*h(u,u)\ge b_*\). There is a constant
\(C_0\ge0\)
such that
\begin{equation}\label{eq:glue-radial-reserve}
 \Ksec_{G_\tau}(u,\partial_z)
       \ge\frac{\beta_0}{\tau}-C_0,
 \qquad \beta_0:=b_*/4>0.
\end{equation}
Also, \eqref{eq:glue-estimate-metric},
\eqref{eq:glue-estimate-first}, and
\eqref{eq:glue-codazzi} give a constant \(M\ge0\),
independent of sufficiently small \(\tau\), such that
\begin{equation}\label{eq:glue-cross-reserve}
 \bigl|G_\tau(R(u,w)w,\partial_z)\bigr|\le M
\end{equation}
whenever \(u,w\) are \(H_z\)-orthonormal. In fact, this block
uses only \(H_z'\), its first tangential derivatives, and the
slice connection; all are uniformly bounded by the stated
\(C_X^1\) and \(C_X^2\) estimates. It contains no \(H_z''\).

Every two-plane in \(\Rr\partial_z\oplus T_xX\) has a
\(G_\tau\)-orthonormal basis of the form
\[
 w,\qquad \cos\theta\,u+\sin\theta\,\partial_z,
\]
with \(u,w\in T_xX\) orthonormal, when the manifold dimension
is at least three. To see this, choose a unit vector in its
intersection with \(T_xX\), and orthogonally project the
other basis vector onto \(T_xX\). If that projection vanishes,
choose any tangential unit vector orthogonal to \(w\). For a
tangential plane take \(\theta=0\). Expansion of the sectional
numerator, followed by \eqref{eq:glue-tangent-reserve}--
\eqref{eq:glue-cross-reserve}, gives
\begin{align}
 \Ksec_{G_\tau}(\Pi)
 &\ge a_0\cos^2\theta
      +\left(\frac{\beta_0}{\tau}-C_0\right)\sin^2\theta
      -2M|\sin\theta\cos\theta|\notag\\
 &\ge\frac{a_0}{2}\cos^2\theta
      +\left(\frac{\beta_0}{\tau}-C_0-\frac{2M^2}{a_0}\right)
        \sin^2\theta.
 \label{eq:glue-all-planes}
\end{align}
Choose \(\tau\) small enough that the coefficient in parentheses
in the last line is at least \(a_0/2\). Then every two-plane
has sectional curvature at least \(a_0/2\) on the interpolation
strip. This bound is uniform in the angle; it includes planes
whose angle depends on \(\tau\).

If the manifold dimension is two, the slice has dimension one
and there are no tangential two-planes. The unique two-plane at
each point is spanned by \(\partial_z\) and a slice unit vector.
Its curvature is positive for small \(\tau\) directly by
\eqref{eq:glue-radial-reserve}. Thus the interpolation conclusion
holds in that dimension as well.

Fix such a \(\tau\) from now on. On the compact original
subcollars the curvature has a positive minimum, and on the
interpolation strip the preceding estimates give a positive
minimum. Hence, on a compact collar containing both interfaces,
there is a number \(c_*>0\) bounding below the sectional
curvatures of all smooth pieces and their one-sided limits.

\subsection{A common tensor-valued mollifier}
We describe the smoothing near one interface, translated to
\(z=0\). Write the fixed piecewise smooth metric as
\(G=dz^2+k_z\). Choose \(\ell>0\) so small that
\(X\times[-3\ell,3\ell]\) contains no other interface, is
contained in the chosen collar, and is disjoint from the strip
used at the other interface. All constants in the rest of the
proof may depend on the fixed \(\tau\) and on \(\ell\).

The tensor family \(k_z\) and its first normal derivative agree
across zero as smooth tensors on \(X\), by
\eqref{eq:glue-endpoint-jets}. The same is true after any number
of tangential derivatives. In particular, in each product chart
\(G\) is \(C^1\), its weak second derivatives are bounded,
and they equal the ordinary second derivatives on the two open
sides. There is no singular measure at the interface: the
coefficient of a possible Dirac term in a second derivative
would be a jump of a first derivative, and every such jump is
zero.

Choose \(\rho\in C_c^\infty((-1,1))\) with
\(\rho\ge0\) and \(\int\rho=1\). For \(0<\nu<\ell/2\),
put \(\rho_\nu(t)=\nu^{-1}\rho(t/\nu)\) and define
\begin{equation}\label{eq:glue-tensor-mollifier}
 k_z^\nu(x)=\int\rho_\nu(z-s)k_s(x)\,ds,
 \qquad G^\nu=dz^2+k_z^\nu
 \quad (|z|\le2\ell).
\end{equation}
Only \(|s|<3\ell\) contributes to this integral, so no extension
outside the chosen collar is needed. The integration takes place
in the single vector space
\(\operatorname{Sym}^2T_x^*X\), for fixed \(x\). In
particular the same nonnegative kernel is used for every tensor
component. Product coordinate changes on \(X\) are independent
of \(z\), so this operation is intrinsically defined on the
collar; no coordinate-dependent patching of separately smoothed
components is needed. Positive definiteness is preserved by the
integral. Tangential differentiation commutes with it, and
normal differentiation acts on the smooth kernel. Thus
\(G^\nu\) is smooth in all variables. Uniform continuity of
the first derivatives of \(G\) on the compact collar gives
\begin{equation}\label{eq:glue-C1-convergence}
 \|G^\nu-G\|_{C^1(X\times[-2\ell,2\ell])}\longrightarrow0.
\end{equation}
To control curvature, we estimate the linear second-derivative
terms and the nonlinear first-derivative terms separately.

\subsection{Curvature under the tensor-valued smoothing}
In fixed product coordinates, set
\[
 \Gamma_{ABC}(G)
 =\tfrac12(G_{BC,A}+G_{AC,B}-G_{AB,C}).
\]
With all indices lowered, the curvature formula for a general
positive definite metric is
\begin{align}
 R_{ABCD}(G)={}&\tfrac12\bigl(
 G_{BD,AC}-G_{BC,AD}-G_{AD,BC}+G_{AC,BD}\bigr)\notag\\
 &+G^{EF}\bigl(
 \Gamma_{ACE}(G)\Gamma_{BDF}(G)
 -\Gamma_{BCE}(G)\Gamma_{ADF}(G)\bigr).
 \label{eq:glue-coordinate-curvature}
\end{align}
In particular,
\begin{equation}\label{eq:glue-linear-nonlinear}
 R(G)=\mathcal L(D^2G)+\mathcal Q(G,DG),
\end{equation}
where \(D\) here denotes coordinate differentiation,
\(\mathcal L\) is a constant-coefficient linear map of second
coordinate derivatives, and \(\mathcal Q\) is the smooth
expression in the second line of
\eqref{eq:glue-coordinate-curvature}. The latter involves no
second derivative. Its dependence on \(G\) is smooth on the
cone of positive definite matrices.

Apply \eqref{eq:glue-linear-nonlinear} to \(G^\nu\).
Convolution commutes with all coordinate derivatives in the
weak sense. The absence of a singular measure in \(D^2G\)
therefore gives the exact identity
\begin{equation}\label{eq:glue-curvature-error}
 R(G^\nu)=\rho_\nu*R(G)+E_\nu,
 \qquad
 E_\nu=\mathcal Q(G^\nu,DG^\nu)
                  -\rho_\nu*\mathcal Q(G,DG).
\end{equation}
The curvature of \(G\) on the right is evaluated on its two
smooth sides; its values on the interface, a set of measure
zero in the convolution variable, are immaterial.

To quantify the error, work on a smaller fixed coordinate
chart whose closure lies in a product chart. Uniform positive
definiteness and \eqref{eq:glue-C1-convergence} put the pairs
\((G^\nu,DG^\nu)\) and \((G,DG)\) in a fixed compact subset
of the domain of \(\mathcal Q\). The mean value estimate and
a modulus of continuity \(\omega_{\mathcal Q}\) of the
continuous function \(\mathcal Q(G,DG)\) yield
\begin{equation}\label{eq:glue-error-control}
 \|E_\nu\|_{C^0}
 \le C\|G^\nu-G\|_{C^1}+\omega_{\mathcal Q}(\nu)
 \longrightarrow0.
\end{equation}
The finitely many chart estimates give uniform convergence on
the collar. Although the separate expressions in
\eqref{eq:glue-linear-nonlinear} are coordinate expressions,
the difference \(E_\nu\) in
\eqref{eq:glue-curvature-error} is a tensor: all curvature
tensors in the convolution are identified using the product
bundle \(TX\oplus\Rr\partial_z\), and the chart changes on
this bundle are independent of \(z\).

We now pass from \eqref{eq:glue-error-control} to a lower bound
on every two-plane. Let
\(\overline G=dz^2+h\), using the common boundary metric
transported by the product structure. On the fixed strip choose
constants \(0<m_*\le M_*<\infty\) such that
\begin{equation}\label{eq:glue-metric-equivalence}
 m_*\overline G\le G\le M_*\overline G.
\end{equation}
The same bounds hold for \(G^\nu\) by
\eqref{eq:glue-tensor-mollifier}. For a decomposable bivector
\(\xi\in\Lambda^2(T_xX\oplus\Rr\partial_z)\), fixed as
\(z\) varies, the positive sectional-curvature bound on each
smooth side gives
\begin{equation}\label{eq:glue-ae-positive}
 \mathcal R_{G(x,z)}(\xi,\xi)
 \ge c_*|\xi|_{G(x,z)}^2
 \ge c_*m_*^2|\xi|_{\overline G}^2
 \quad\text{for almost every }z.
\end{equation}
Here \(\mathcal R\) is the bilinear form on \(\Lambda^2\)
fixed in Section~\ref{subsec:conventions}; in particular
\(\mathcal R_G(A\wedge B,A\wedge B)=G(R(A,B)B,A)\).
The factor \(m_*^2\) follows by taking the second exterior
power of the metric comparison in
\eqref{eq:glue-metric-equivalence}.

Convolve \eqref{eq:glue-ae-positive} with the common nonnegative
kernel. The product identification preserves decomposability,
so the same bivector can be used in every term of the integral.
The linear conversion of the all-lowered tensor \(R\) to
\(\mathcal R\) in \eqref{eq:glue-curvature-error}, and
\eqref{eq:glue-error-control}, then give
\[
 \mathcal R_{G^\nu}(\xi,\xi)
 \ge (c_*m_*^2-e_\nu)|\xi|_{\overline G}^2,
 \qquad e_\nu\longrightarrow0.
\]
Here \(e_\nu\) is an upper bound for the operator norm of the
induced error bilinear form on \(\Lambda^2\), measured with
\(\overline G\). Choose \(\nu\) so small that
\(e_\nu\le c_*m_*^2/2\). Since
\(|\xi|_{G^\nu}^2\le M_*^2|\xi|_{\overline G}^2\), we obtain
\begin{equation}\label{eq:glue-smoothed-positive}
 \Ksec_{G^\nu}\ge\frac{c_*m_*^2}{2M_*^2}>0
 \qquad (|z|\le2\ell).
\end{equation}
The argument uses only decomposable bivectors, with the
nonlinear curvature error \(E_\nu\) controlled by
\eqref{eq:glue-error-control}.

\subsection{Localization and completion of the proof}
To leave the old metric unchanged away from the interface,
choose \(\zeta\in C_c^\infty((-2\ell,2\ell))\) with
\(0\le\zeta\le1\) and \(\zeta=1\) on
\([-\ell,\ell]\). Define
\begin{equation}\label{eq:glue-localization}
 \widehat k_z=k_z+\zeta(z)(k_z^\nu-k_z),\qquad
 \widehat G=dz^2+\widehat k_z.
\end{equation}
This is smooth at the interface because it equals \(G^\nu\)
there, and is smooth elsewhere because \(G\) is smooth away
from the interface. It is positive definite, being a convex
combination of the two positive definite slice metrics. Its
curvature is positive where \(\zeta=1\) by
\eqref{eq:glue-smoothed-positive}, and where \(\zeta=0\) it
is unchanged.

It remains to check the transition region
\(\ell\le|z|\le2\ell\). For \(\nu<\ell/2\), the
convolution at these points samples only one smooth side of
\(G\). On this compact region, convolution therefore gives
\(k^\nu-k\to0\) in \(C^2\), including all tangential and
normal derivatives of total order at most two. Since \(\zeta\)
is fixed,
\[
 \|\widehat G-G\|_{C^2(\{\ell\le|z|\le2\ell\})}
 \le C_\zeta
       \|k^\nu-k\|_{C^2(\{\ell\le|z|\le2\ell\})}
 \longrightarrow0.
\]
The coordinate curvature formula and uniform positive
definiteness imply convergence of sectional curvatures,
uniformly on the compact bundle of two-planes in this region.
The old metric has the positive lower bound \(c_*\), so after
decreasing \(\nu\) again the localized metric is also strictly
positively curved there. Thus the localized modification
preserves positive sectional curvature by \(C^2\) closeness
on this smooth transition region.

Carry out this construction in the two disjoint strips about
\(z=-\tau\) and \(z=\tau\), choosing the smoothing widths
small enough for both. The metric is then smooth and strictly
positively curved everywhere in the collar, and equals the
original metrics outside the chosen neighborhood. This proves Lemma~\ref{lem:gluing}.


\begin{thebibliography}{99}
\bibitem{BS}
S.~Brendle and R.~Schoen,
\emph{Manifolds with \(1/4\)-pinched curvature are space forms},
J. Amer. Math. Soc. \textbf{22} (2009), no.~1, 287--307.
\href{https://doi.org/10.1090/S0894-0347-08-00613-9}{doi:10.1090/S0894-0347-08-00613-9}.

\bibitem{DMR}
C.~Dur\'an, A.~Mendoza, and A.~Rigas,
\emph{Blakers--Massey elements and exotic diffeomorphisms of
\(S^6\) and \(S^{14}\) via geodesics},
Trans. Amer. Math. Soc. \textbf{356} (2004), no.~12, 5025--5043.
\href{https://doi.org/10.1090/S0002-9947-04-03469-5}{doi:10.1090/S0002-9947-04-03469-5}.

\bibitem{DPR}
C.~Dur\'an, T.~P\"uttmann, and A.~Rigas,
\emph{An infinite family of Gromoll--Meyer spheres},
Arch. Math. (Basel) \textbf{95} (2010), no.~3, 269--282.
\href{https://doi.org/10.1007/s00013-010-0161-x}{doi:10.1007/s00013-010-0161-x}.

\bibitem{EK}
J.-H.~Eschenburg and M.~Kerin,
\emph{Almost positive curvature on the Gromoll--Meyer sphere},
Proc. Amer. Math. Soc. \textbf{136} (2008), no.~9, 3263--3270.
\href{https://doi.org/10.1090/S0002-9939-08-09429-X}{doi:10.1090/S0002-9939-08-09429-X}.

\bibitem{GKS}
S.~Goette, M.~Kerin, and K.~Shankar,
\emph{Highly connected \(7\)-manifolds and non-negative sectional curvature},
Ann. of Math. (2) \textbf{191} (2020), no.~3, 829--892.
\href{https://doi.org/10.4007/annals.2020.191.3.3}{doi:10.4007/annals.2020.191.3.3}.

\bibitem{GM}
D.~Gromoll and W.~Meyer,
\emph{An exotic sphere with nonnegative sectional curvature},
Ann. of Math. (2) \textbf{100} (1974), no.~2, 401--406.
\href{https://doi.org/10.2307/1971078}{doi:10.2307/1971078}.

\bibitem{GZ}
K.~Grove and W.~Ziller,
\emph{Curvature and symmetry of Milnor spheres},
Ann. of Math. (2) \textbf{152} (2000), no.~1, 331--367.
\href{https://doi.org/10.2307/2661385}{doi:10.2307/2661385}.

\bibitem{GFL}
S.~Guo, E.~X.~Fang, and J.~Lu,
\emph{A two-stage construction of positive curvature on the Gromoll--Meyer sphere},
preprint, \href{https://arxiv.org/abs/2609.12882v1}{arXiv:2609.12882v1}
(2026).

\bibitem{KM}
M.~A.~Kervaire and J.~Milnor,
\emph{Groups of homotopy spheres. I},
Ann. of Math. (2) \textbf{77} (1963), no.~3, 504--537.
\href{https://doi.org/10.2307/1970128}{doi:10.2307/1970128}.

\bibitem{Milnor}
J.~Milnor,
\emph{On manifolds homeomorphic to the \(7\)-sphere},
Ann. of Math. (2) \textbf{64} (1956), no.~2, 399--405.
\href{https://doi.org/10.2307/1969983}{doi:10.2307/1969983}.

\bibitem{ON}
B.~O'Neill,
\emph{The fundamental equations of a submersion},
Michigan Math. J. \textbf{13} (1966), no.~4, 459--469.
\href{https://doi.org/10.1307/mmj/1028999604}{doi:10.1307/mmj/1028999604}.

\bibitem{Ouyang}
Z.~Ouyang,
\emph{A positively curved metric on the Gromoll--Meyer sphere},
preprint, \href{https://arxiv.org/abs/2609.11484v1}{arXiv:2609.11484v1}
(2026).

\bibitem{PW}
P.~Petersen and F.~Wilhelm,
\emph{An exotic sphere with positive sectional curvature},
preprint, \href{https://arxiv.org/abs/0805.0812v3}{arXiv:0805.0812v3}
(2008).

\bibitem{RW}
P.~Reiser and D.~J.~Wraith,
\emph{A generalization of the Perelman gluing theorem and applications},
preprint, \href{https://arxiv.org/abs/2308.06996v2}{arXiv:2308.06996v2}
(2024).

\bibitem{sagemath}
The~Sage~Developers.
\newblock {\em {S}ageMath, the {S}age {M}athematics {S}oftware {S}ystem (Version 10.9)},
\newblock 2026.
\newblock \href{https://sagemath.org}{https://sagemath.org}.



\bibitem{Wilhelm}
F.~Wilhelm,
\emph{An exotic sphere with positive curvature almost everywhere},
J. Geom. Anal. \textbf{11} (2001), no.~3, 519--560.
\href{https://doi.org/10.1007/BF02922018}{doi:10.1007/BF02922018}.


\end{thebibliography}
\end{document}